\newtheorem{proposition}{Proposition}[section]
\newtheorem{theorem}[proposition]{Theorem}
\newtheorem{lemma}[proposition]{Lemma}
\newtheorem{corollary}[proposition]{Corollary}
\theoremstyle{definition}
\newtheorem{definition}[proposition]{Definition}
\newtheorem{example}[proposition]{Example}
\theoremstyle{remark}
\newtheorem{remark}[proposition]{Remark}
\title[Homotopy type of the flag complex over a finite vector space]{The homotopy type of skeleta of the flag complex over a finite vector space}
\thanks{The first and third authors were supported by Conacyt scholarships. The second author was supported by Conacyt research grant CB-2013-01-221221.}
\author{Jorge Aguilar-Guzm\'an, Jes\'us Gonz\'alez and Jos\'e Luis Le\'on-Medina}
\begin{document}
\maketitle

\begin{abstract}
The aim of this paper is to give a (discrete) Morse theoretic proof of the fact that the $k$-th skeleton of the flag complex $\mathcal{F}$, associated to the lattice of subspaces of a finite dimensional vector space, is homotopy equivalent to a wedge of spheres of dimension $\min\{k,\dim(\mathcal{F})\}$. The tight control provided by Morse theoretic methods allows us to give an explicit formula for the number of spheres appearing in each of these wedge summands.
\end{abstract}

{\small\it 2010 Mathematics Subject Classification: 06A07, 55P15, 57R70} \\
{\small\it Keywords and phrases: Flag complex, discrete Morse theory, acyclic pairing}

%\numberwithin{thm}{section}
\section{Introduction}

One of the most impressive applications of discrete Morse theory is the availability to determine the homotopy type of a simplicial complex by constructing a suitable discrete Morse function. An illustrative example of such a situation is the determination of the homotopy type of the flag complex $F(V)$ associated to the lattice of subspaces of a finite vector space $V$ (see the next section for a review of the explicit definitions). As proved in \cite[Proposition 3.6]{Kratzer-Thevenaz}, $F(V)$ is homotopy equivalent to a wedge of spheres. A Morse theoretic proof of such a fact is indicated in \cite{Zax} where, however, some of the key proof details are not provided. A first aim of this paper is to clarify and formalize some of the ideas in \cite{Zax}, providing complete proof details. Additionally, and also following the indications in \cite{Zax}, we prove in detail the corresponding homotopy equivalence
\begin{equation}\label{decomposition}
F(V)^{(k)}\simeq\bigvee S^{\min\{k,\dim(F(V))\}}.
\end{equation}

The fact that each skeleton of $F(V)$ has the homotopy type of a wedge of spheres is certainly well known, for $F(V)$ is shellable, and skeleta of shellable complexes are shellable again (see~\cite[Theorem 8.2.18]{MR2724673}). A main contribution of this work is to show that, by replacing shellability methods by the fine control coming from discrete Morse theory techniques, it is possible to derive an explicit formula for the number of spheres appearing in~(\ref{decomposition}) ---information that, to the best of our knowledge, was not available previously. This illustrates the general principle noted in~\cite[Remark~12.4]{MR2361455}.

We will follow the standard notation and conventions in Forman's discrete Morse theory, see for example \cite{Forman}. The key result we need is the following:

\begin{theorem}[{\cite[Theorem 2.5]{Forman}}]\label{homotopytype} Let $X$ be a simplicial complex with a discrete Morse function $f$. Then $X$ is homotopy equivalent to a cell complex containing the same number of cells of a given dimension as there are critical simplices of $f$ of that dimension.
\end{theorem}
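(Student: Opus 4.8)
The plan is to prove this in the classical way, by tracking how the \emph{sublevel subcomplexes} of $f$ grow. Recall that a discrete Morse function $f$ determines a \emph{gradient vector field} $V$: the collection of pairs $(\sigma,\tau)$ in which $\sigma$ is a codimension-one face of $\tau$ and $f(\sigma)\ge f(\tau)$. The defining inequalities for a discrete Morse function guarantee that each simplex of $X$ lies in at most one pair of $V$, and that the critical simplices are exactly those in no pair. For $c\in\mathbb{R}$ set
\[
X(c)=\bigcup_{\,f(\sigma)\le c}\{\,\text{faces of }\sigma\,\},
\]
a subcomplex of $X$; since $f$ takes only finitely many values, $X(c)$ is empty for $c$ small, equals $X$ for $c$ large, and changes only at finitely many values of $c$.

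First I would establish the ``no critical value'' step: if the half-open interval $(a,b]$ contains no value $f(\sigma)$ of a critical simplex $\sigma$, then $X(b)$ collapses onto $X(a)$, so the inclusion is a homotopy equivalence. Indeed every simplex $\sigma$ with $a<f(\sigma)\le b$ is non-critical, hence lies in a unique pair of $V$, and these pairs exhaust the relevant simplices, organizing them into elementary-collapse candidates $(\sigma,\tau)$. One must then collapse these pairs \emph{in some valid order}, i.e.\ so that at each stage the simplex being removed is a free face. The existence of such an order is precisely where \emph{acyclicity} of $V$ is used: a nontrivial closed $V$-path would make every ordering impossible, and its absence lets one peel the pairs off inductively (formally, by a linear extension of the partial order generated by the gradient paths, or by induction on $f$-values within the strip).

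Next I would handle the ``single critical value'' step: if $(a,b]$ contains the value $f(\sigma_0)$ of exactly one critical simplex $\sigma_0$, of dimension $p$, and of no other critical simplex, then $X(b)$ is homotopy equivalent to $X(a)$ with a single $p$-cell attached along a map $\partial\Delta^p\to X(a)$. Here one first collapses away all the \emph{paired} simplices in the strip as in the previous step, leaving $X(a)$ together with the extra simplex $\sigma_0$; adjoining $\sigma_0$ then amounts to attaching a $p$-cell along its boundary. Finally, after a small perturbation of $f$ (or an inductive bookkeeping argument) ensuring that no two critical simplices share a level, I would climb through the finitely many critical values one at a time, starting from $X(c)=\emptyset$: each step is either a homotopy equivalence (no critical value crossed) or the attachment of a cell whose dimension equals that of the crossed critical simplex. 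Composing these, $X=X(\max f)$ is homotopy equivalent to a CW complex built with exactly one cell of dimension $p$ for each critical $p$-simplex of $f$, as asserted.

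The main obstacle is the ``no critical value'' step, and more precisely the verification that the matched pairs in a level strip can be collapsed \emph{in some order}: this is the technical heart of the argument and the only place the acyclicity hypothesis is genuinely needed, everything else being bookkeeping over the finitely many levels of $f$.
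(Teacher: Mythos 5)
There is nothing in the paper to compare your argument with: Theorem~\ref{homotopytype} is quoted background, cited verbatim from Forman's \emph{A user's guide to discrete Morse theory} (Theorem~2.5 there), and the paper never proves it --- it only uses it, via acyclic pairings, to read off the homotopy type of $F(\mathbb{F}_q^n)$ and its skeleta. Your sketch is exactly Forman's classical proof (level subcomplexes $X(c)$, collapsibility across strips with no critical value, cell attachment at each critical value, then induction over the finitely many levels), and it is correct in outline. Two remarks on the details you would still have to supply. First, the ordering of the elementary collapses in the ``no critical value'' step does not require a separate acyclicity argument: since the pairing comes from a discrete Morse function, processing the paired simplices in decreasing order of their $f$-values makes each simplex free at the moment it is removed; acyclicity of the gradient field is automatic (this is one direction of Theorem~\ref{theorem2}) rather than an extra input. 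Second, in the ``single critical value'' step the phrase ``leaving $X(a)$ together with the extra simplex $\sigma_0$'' is too loose: proper faces of $\sigma_0$ may themselves have $f$-values in $(a,b]$ and be matched with simplices outside $\overline{\sigma_0}$, so $X(a)\cup\overline{\sigma_0}$ need not be a subcomplex of $X(b)$ and you cannot literally collapse everything else first. The correct statement is that the collapses give a deformation retraction of $X(b)\setminus\{\sigma_0\}$ onto $X(a)$, and $\sigma_0$ is then attached along the composite of $\partial\sigma_0\hookrightarrow X(b)\setminus\{\sigma_0\}$ with that retraction, which is precisely how Forman's detailed proof (in \emph{Morse theory for cell complexes}) handles it. With those adjustments your plan is the standard and complete route to the theorem.
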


It is usually more convenient to handle acyclic pairings rather than discrete Morse functions, as the former ones ignore irrelevant quantitative information of the latter ones, preserving only the essential qualitative information regarding critical simplices. Pairings coming from discrete Morse functions are characterized by and can be constructed using the following results.

\begin{theorem}[{\cite[Theorem 3.5]{Forman}}]\label{theorem2} A pairing $P$ on a simplicial complex $X$ corresponds to a discrete Morse function $f$ if and only if every simplex of $X$ appears in at most one pair of $P$ and $P$ is acyclic.
\end{theorem}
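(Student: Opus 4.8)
The plan is to translate both implications into statements about the \emph{modified Hasse diagram} $\mathcal{H}_P$ of $X$: the directed graph on the simplices of $X$ that has an edge $\tau\to\sigma$ whenever $\sigma$ is a codimension-one face of $\tau$ that is \emph{not} matched to it by $P$, and an edge $\sigma\to\tau$ whenever $(\sigma,\tau)\in P$. Acyclicity of $P$ means exactly that $\mathcal{H}_P$ has no directed cycle, so once the discrete Morse conditions are unpacked the whole statement becomes a purely combinatorial equivalence. I would treat the two directions separately.

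For the ``only if'' direction I would start from a discrete Morse function $f$ and form $P=\{(\sigma,\tau): \sigma\subset\tau \text{ of codimension one},\ f(\tau)\le f(\sigma)\}$, noting that the inequality $f(\tau)\le f(\sigma)$ expresses simultaneously that $\tau$ is an exceptional coface of $\sigma$ and that $\sigma$ is an exceptional face of $\tau$. The two defining inequalities of a discrete Morse function then say directly that $\sigma$ is the lower element of at most one pair of $P$ and the upper element of at most one pair of $P$; to rule out its being both, I would prove the standard local lemma that no simplex $\sigma^{(p)}$ can have both a coface $\tau^{(p+1)}$ with $f(\tau)\le f(\sigma)$ and a face $\nu^{(p-1)}$ with $f(\nu)\ge f(\sigma)$ — picking the unique second $p$-simplex $\sigma'$ with $\nu\subset\sigma'\subset\tau$, the inequality at $\nu$ forces $f(\sigma')>f(\nu)\ge f(\sigma)\ge f(\tau)$, so $\sigma$ and $\sigma'$ are two faces of $\tau$ with $f$-value at least $f(\tau)$, violating the inequality at $\tau$. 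Hence $P$ is a matching. For acyclicity I would observe that $f$ is non-increasing along every edge of $\mathcal{H}_P$ — weakly along the ``up'' edges coming from $P$, and \emph{strictly} along the ``down'' edges, because an unmatched codimension-one face $\sigma$ of $\tau$ satisfies $f(\sigma)<f(\tau)$ — so a directed cycle would keep $f$ constant around it, hence could use no down edge, hence would consist of up edges only; since every up edge raises dimension by one, there is no such cycle.

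For the ``if'' direction I would assume $P$ is an acyclic matching, so that $\mathcal{H}_P$ is a finite acyclic digraph, and take $f$ to be any topological ordering of it, i.e. a bijection from the simplices of $X$ to $\{1,\dots,N\}$ with $f(\alpha)>f(\beta)$ whenever $\alpha\to\beta$ is an edge. For a codimension-one pair $\sigma\subset\tau$ this gives $f(\tau)>f(\sigma)$ when $(\sigma,\tau)\notin P$ and $f(\tau)<f(\sigma)$ when $(\sigma,\tau)\in P$; consequently the only coface $\tau$ of a simplex $\sigma$ with $f(\tau)\le f(\sigma)$, and the only face $\nu$ of $\sigma$ with $f(\nu)\ge f(\sigma)$, are the $P$-partners of $\sigma$, and there is at most one on each side because $P$ is a matching. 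Thus $f$ is a discrete Morse function, its induced matching is exactly $P$, and its critical simplices are precisely the simplices left unmatched by $P$.

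The step I expect to be the main obstacle is the acyclicity half of the ``only if'' direction: it is the only place where a discrete Morse function is used beyond its local inequalities, and it rests on the \emph{strict} inequality $f(\sigma)<f(\tau)$ for an unmatched codimension-one face — without strictness the ``$f$ is constant around a cycle'' observation would fail to exclude mixed cycles. The remainder reduces to routine bookkeeping with the two defining inequalities, together with the elementary fact that a finite acyclic digraph admits a topological ordering.
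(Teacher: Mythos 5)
This statement is one the paper does not prove at all: it is quoted verbatim as background, with the proof deferred to Forman's user's guide (his Theorem 3.5), so there is no in-paper argument to compare yours against. Judged on its own, your proof is correct and is essentially the standard one. The forward direction combines the two defining inequalities of a discrete Morse function with the exclusivity lemma (your $\nu\subset\sigma,\sigma'\subset\tau$ argument, which is exactly Forman's local lemma and correctly uses the fact that in a simplicial complex there are precisely two $p$-simplices between $\nu^{(p-1)}$ and $\tau^{(p+1)}$) to get a matching, and then uses monotonicity of $f$ along the modified Hasse diagram, with strictness on the down edges, to exclude directed cycles. The reverse direction via a topological ordering of the acyclic digraph $\mathcal{H}_P$ is the usual construction, and you correctly note that the resulting $f$ induces exactly the pairing $P$. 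The only point worth making explicit is your opening identification ``acyclicity of $P$ means $\mathcal{H}_P$ has no directed cycle'': if acyclicity is defined via closed $V$-paths (Forman's convention) rather than via the digraph (Jonsson's and Kozlov's convention, which is what this paper effectively uses through Lemma \ref{lemmapairing}), you should add the one-line observation that for a matching no two up edges of $\mathcal{H}_P$ can be consecutive, so any directed cycle has equally many up and down edges with no two ups adjacent and hence alternates strictly, i.e.\ is a closed $V$-path; with that remark the two notions coincide and your proof is complete.
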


\begin{lemma}[{\cite[Lemma~4.2]{jonsson}}]\label{lemmapairing} Let $X$ be a simplicial complex that decomposes as the disjoint union of non-empty collections $X_i$ of simplexes, indexed by the elements $i$ in a partially ordered set $I$. Assume that for each $i\in I$, $\bigcup_{j \leq i} X_j$ is a subcomplex of $X$. For each $i\in I$, let $P_i$ be an acyclic pairing on the simplices of $X_i$. Then $\bigcup_{i\in I} P_i$ is an acyclic pairing on~$X$.
\end{lemma}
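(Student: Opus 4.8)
The plan is to establish separately the two requirements in the definition of an acyclic pairing for $P:=\bigcup_{i\in I}P_i$: that $P$ is a well-defined partial matching (every simplex of $X$ lies in at most one pair), and that $P$ admits no closed gradient path. The matching condition is pure bookkeeping. Since $X=\bigsqcup_{i}X_i$, each simplex $\sigma$ of $X$ lies in a unique block $X_i$; and since both members of any pair of $P_j$ belong to $X_j$, the simplex $\sigma$ can only be matched by a pair coming from $P_i$. As $P_i$ is itself a matching, $\sigma$ appears in at most one pair of $P$. Every pair of $P$ is inherited from some $P_i$, hence is a genuine covering relation in the face poset, so $P$ is a partial matching.

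The heart of the argument is acyclicity, which I would prove by contradiction. Suppose $P$ has a closed gradient path, that is, a sequence of matched pairs $(\sigma_0,\tau_0),(\sigma_1,\tau_1),\dots,(\sigma_r,\tau_r)$ in $P$ with $r\geq 1$, where, reading the indices cyclically so that $\sigma_{r+1}=\sigma_0$, each $\sigma_{\ell+1}$ is a facet of $\tau_\ell$ distinct from $\sigma_\ell$. For each $\ell$ let $k_\ell\in I$ be the unique index with $(\sigma_\ell,\tau_\ell)\in P_{k_\ell}$, so that $\sigma_\ell,\tau_\ell\in X_{k_\ell}$.

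The key step, and the only place where the subcomplex hypothesis enters, is the observation that $k_{\ell+1}\leq k_\ell$ for every $\ell$. Indeed $\tau_\ell\in X_{k_\ell}\subseteq\bigcup_{j\leq k_\ell}X_j$, and the latter is a subcomplex of $X$, hence contains every face of $\tau_\ell$, in particular the facet $\sigma_{\ell+1}$. Thus $\sigma_{\ell+1}\in X_j$ for some $j\leq k_\ell$, and the disjointness of the blocks forces $k_{\ell+1}=j\leq k_\ell$. Running this inequality around the cycle yields $k_0\geq k_1\geq\cdots\geq k_r\geq k_0$, so by antisymmetry of the order all the $k_\ell$ coincide with a single index $k\in I$. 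But then every $\sigma_\ell$ and $\tau_\ell$ lies in $X_k$ and every matched pair of the path lies in $P_k$, so our closed gradient path is in fact a closed gradient path of $P_k$, contradicting the acyclicity of $P_k$. Hence $P$ is acyclic.

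I expect the only delicate point to be the careful formalization of "closed gradient path" in Forman's sense and the verification that each down-step $\tau_\ell\succ\sigma_{\ell+1}$ stays inside $\bigcup_{j\leq k_\ell}X_j$; once the subcomplex hypothesis is invoked there, the monotonicity $k_{\ell+1}\leq k_\ell$ and the ensuing collapse of the indices around the cycle are immediate, and the reduction to the acyclicity of a single $P_k$ closes the argument.
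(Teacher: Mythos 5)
Your proof is correct, and it is essentially the standard argument behind the sources the paper defers to (the paper itself does not prove this lemma but cites Jonsson, Hersh and Zax): the filtration hypothesis makes the block index weakly decrease along every step of a closed V-path, so by antisymmetry all indices coincide and the cycle would violate the acyclicity of a single $P_k$. Both the matching verification and the reduction around the cycle are sound, so nothing needs to be added.
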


Lemma~\ref{lemmapairing} is taken from~\cite[p.~27]{Zax}, where it appears with the hypothesis that $I$ has a unique minimal element. The additional condition is reminiscent from the lexicographic discrete Morse function constructions in \cite[Lemma~4.1]{hersh}, which is the source reference used by Zax. Lemma~\ref{lemmapairing} is easily deduced from \cite[Lemma~4.2]{jonsson}; alternatively, either of the (essentially equivalent) proofs given in~\cite{hersh,jonsson,Zax} works for our purposes.

\section{The Flag Complex}

\begin{definition} Let $V$ be an $n$-dimensional vector space. A $k$-flag $f_k$ in $V$ is a sequence of $k$ subspaces $V_{d_1}, \dots , V_{d_k}$ such that $0 \subsetneq V_{d_1}\subsetneq \cdots \subsetneq V_{d_k}\subsetneq V$. Here the index $d_j$ in $V_{d_j}$ stands for the dimension of $V_{d_j}$, i.e., $d_j=\dim (V_{d_j})$. By abuse of notation, we will write $f_k=V_{d_1}\subsetneq\cdots\subsetneq V_{d_k}$, while the notation $W\in f_k$ will mean that $W=V_{d_j}$ for some $j=1,\ldots,k$. Further, the ($k-1$)-flag obtained from $f_k=V_{d_1}\subsetneq\cdots\subsetneq V_{d_k}$ by removing the subspace $V_{d_\ell}$ will be denoted by $f_k\setminus V_{d_\ell}$, while the inverse operation (inserting a subspace in a chain of nested subspaces) will be indicated by a plus sign. Thus, in the situation above, $f_k=(f_k\setminus V_{d_\ell})+V_{d_\ell}$.

Let $\mathbb{F}_q$ be a finite field, the flag complex $F(\mathbb{F}_q^n)$  is the (abstract) simplicial complex whose vertices are the proper subspaces of $\mathbb{F}_q^n$ and whose ($k-1$)-dimensional simplices are the $k$-flags of $\mathbb{F}_q^n$. Note that face relation is given by taking subsequences.
\end{definition}

\begin{example}\label{exampleF23} \emph{The complex $F(\mathbb{F}_2^3)$.}
Denote the three standard basis vectors of $\mathbb{F}_2^3$ as $e_1=(1{,}\ 0{,} \ 0)$, $e_2=(0{,}\ 1{,}\ 0)$ and $e_3=(0{,}\ 0{,}\ 1)$. The vector space spanned by vectors $w_1, \dots, w_n$ is denoted by $\langle w_1, \dots, w_n \rangle$. Note that $\mathbb{F}_2^3$ has seven lines $$\mbox{$\langle e_1 \rangle$, $\langle
e_2\rangle$, $\langle e_3 \rangle$, $\langle e_1+e_2\rangle$, $\langle e_1+e_3\rangle$, $\langle e_2+e_3\rangle$, $\langle e_1+e_2+e_3\rangle$}$$ and seven planes
\begin{align*}
\langle e_1, e_2\rangle, & \langle e_1, e_3 \rangle, \langle e_1, e_2+e_3\rangle, \langle e_2, e_3 \rangle, \\ & \langle e_2, e_1+e_3\rangle, \langle e_3, e_1+e_2\rangle, \langle e_1+e_2, e_1+e_3\rangle.
\end{align*}
Altogether, there are 14 vertices in $F(\mathbb{F}_2^3)$. The 1-simplices of $F(\mathbb{F}_2^3)$ are those sequences $L\subsetneq P$ consisting of a line $L$ contained in a plane $P$. Although a plane is determined by two lines, it contains a total of three lines. So, in total, there are 21 simplices of dimension 1. An alternative way of counting 1-simplexes is by observing that each line is contained in three different planes. The complete simplicial structure of $F(\mathbb{F}^3_2)$ can be represented by the Heawood graph shown in Figure~\ref{heawood}.
\begin{figure}[htb]
\begin{center}\footnotesize
\begin{tikzpicture}
% create the node
\node[draw=black,ultra thick, minimum size=6cm,regular polygon,regular polygon sides=14] (a) {};
% draw a black dot in each vertex
\foreach \x in {1,2,...,14}
  \fill (a.corner \x) circle[radius=4pt];
\draw[ultra thick] (a.corner 12)node[right, xshift=0.1cm]{$\langle e_1+e_2+e_3\rangle$} -- (a.corner 3)node[left, yshift=0.3cm]{$\langle e_1, e_2+e_3\rangle$};
\draw[ultra thick] (a.corner 13)node[right, yshift=0.2cm] {$\langle e_2, e_1+e_3\rangle$}--(a.corner 8)node[below,xshift=-0.2cm, yshift=-0.08cm]{$\langle e_1+e_3\rangle$};
\draw[ultra thick] (a.corner 14)node[above, xshift=0.4cm]{$\langle e_2 \rangle$}--(a.corner 5)node[left, xshift=-0.1cm]{$\langle
 e_2, e_3 \rangle$};
\draw[ultra thick] (a.corner 1)node[above, yshift=0.1cm]{$\langle e_1,e_2 \rangle$}--(a.corner 10)node[below,xshift=0.4cm, yshift=-0.1cm]{$\langle
 e_1+e_2\rangle$};
\draw[ultra thick] (a.corner 2)node[above, yshift=0.12cm]{$\langle e_1 \rangle$}--(a.corner 7)node[below, xshift=-0.4cm, yshift=-0.1cm]{$\langle
 e_1,e_3 \rangle$};
\draw[ultra thick] (a.corner 4)node[left, xshift=-0.1cm, yshift=0.1cm]{$\langle e_2+e_3\rangle$}--(a.corner 9)node[below, xshift=0.4cm, yshift=-0.08cm]{$\langle e_1+e_2, e_1+e_3\rangle$};
\draw[ultra thick] (a.corner 6)node[left, xshift=-0.08cm, yshift=-0.05cm]{$\langle e_3 \rangle$}--(a.corner 11)node[right, xshift=0.08cm]{$\langle e_3, e_1+e_2 \rangle$};
\end{tikzpicture}
\end{center}
\caption{Simplicial structure of $F(\mathbb{F}_2^3)$}\label{heawood}
\end{figure}
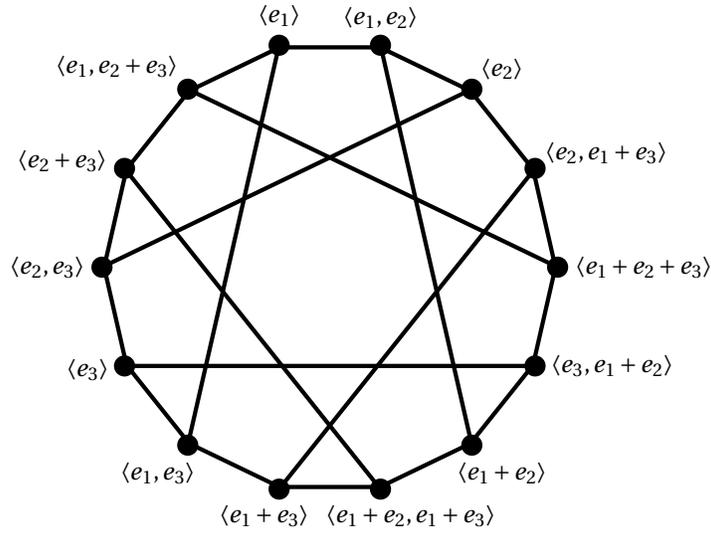
\end{example}

We analyze the homotopy type of the flag complex $F(\mathbb{F}_q^n)$ and of each of its skeleta $F(\mathbb{F}_q^n)^{(k)}$ using discrete Morse theory. In summary, we will proceed as follows:
\begin{itemize}
\item[(A)] First we label each maximal flag in order to induce a partition of the simplices of the complete flag complex.
\item[(B)] Next we give an acyclic pairing for each class in the above partition, and use Lemma~\ref{lemmapairing} to get a corresponding acyclic pairing for $F(\mathbb{F}^n_q)$.
\item[(C)] Then we use Theorem~\ref{theorem2} to prove that the homotopy type of the complex $F(\mathbb{F}^n_q)$ is a wedge of $(n-2)$-spheres. Counting the number of critical simplices in the pairing gives us the number of spheres in the wedge sum.
\item[(D)] Lastly, we delete all those simplices of dimension greater than $k$ and count the number of the resulting critical simplices. This gives us the number of spheres in the homotopy description of the skeleton $F(\mathbb{F}_q^n)^{(k)}$ as a wedge of $k$-spheres.
\end{itemize}

\section{Partitioning the Flag Complex}\label{secpar}

Let $\Sigma_n$ denote the set of permutations of the first $n$ natural numbers. We spell out a permutation $\sigma\in\Sigma_n$ by yuxtaposition of its values: $\sigma=\sigma(1)\sigma(2)\cdots\sigma(n)$.

In this section we assign, to each $k$-flag $f_k$, both an $n\times n$ matrix with entries in $\mathbb{F}_q$, and a label in $\Sigma_n$. Details are given below, first when $f_k$ is maximal (i.e.~$k=n-1$):

We use elementary column operations which involve either multiplying a column by a non-zero factor, or adding a multiple of some column $i$ to some other column $j$ with $i<j$, in order to associate, to each maximal flag $f_{n-1}=V_1 \subsetneq V_2 \subsetneq \cdots \subsetneq V_{n-1}$ in $F(\mathbb{F}_q^n)$, an $n\times n$ matrix $[a_{ij}]$ satisfying:
\begin{itemize}
\item[\it (i)] The first $k$ vector columns $w_1, \dots, w_k$ of $[a_{ij}]$ span the $k$-dim\-en\-sional vector space $V_k$ for each $k=1,\ldots, n$. (Here and in what follows we agree to set $V_{n}=\mathbb{F}_q^n$.)
\item[\it (ii)] For each column $j$, the highest row value $i$ with $a_{ij}\neq 0$ has in fact $a_{ij}=1$. Under these conditions, the element $a_{ij}$ is called the pivot of the $j$-th column.
\item[\it (iii)] The matrix $[a_{ij}]$ has zero entries  to the right of each pivot.
\end{itemize}

Note that \emph{(i)} and \emph{(ii)} ensure the uniqueness of $w_1$. Having fixed $w_{1}$, \emph{(i)--(iii)} then imply the uniqueness of $w_2$, and so on. Therefore, the requirements \emph{(i)--(iii)} \rule{.1mm}{0mm}allow us to assign to $f_{n-1}$ a well-defined matrix $M(f_{n-1})$, which will be called the \emph{minimal matrix representation} (or \emph{matrix representation}, for short) of $f_{n-1}$. The label assigned to $f_{n-1}$ (and to $M(f_{n-1})$) is the permutation $i=i_1 i_2 \dots i_n \in \Sigma_n$, where $i_j$ is the row index of the pivot of the $j$-th column in the minimal matrix representation of $f_{n-1}$.
  
Note that maximal flags are recovered from their matrix representations, however different maximal flags can have the same label.

\begin{example}\label{ejemplominimalrepresentation} Consider the maximal flag $f_{2}=\langle e_1+e_3 \rangle \subsetneq \langle e_1, e_3 \rangle$ in the vector space $F(\mathbb{F}_2^3)$. Evidently, the first column vector of the matrix representation of $f_2$ must be $w_{1}=(1,\ 0,\ 1)$. We have two choices for the second vector: either $e_1$ or $e_3$. But \emph{(iii)} above rules out $e_3$, so the second column of the matrix representation must be $w_{2}=(1,\ 0,\ 0)$. Likewise, \emph{(iii)} then forces $w_{3}=(0,\ 1,\ 0)$. In this case, the pivots are at positions $(3,1)$, $(1,2)$ and $(2,3)$, and the matrix representation of $f_{2}$ is
\begin{equation}\label{ejemrepre}
\begin{pmatrix}
1 & {\bf 1} & 0 \\
0 & 0 & {\bf 1} \\
{\bf 1} & 0 & 0 
\end{pmatrix},
\end{equation}
with associated label $312$ coming from the boldface ones at the pivot positions.
\end{example}

As indicated above, the construction of the minimal matrix representation of a maximal flag can be done algorithmically via column operations on matrices. For instance, in Example~\ref{ejemplominimalrepresentation} we could start, say, with the basis $w'_1=e_1+e_3$, $w'_2=e_3$, $w'_3=e_1+e_2$ which satisfies \emph{(i)} above. Then~(\ref{ejemrepre}) is obtained from the matrix with columns $w'_1,w'_2,w'_3$ by the following pair of column operations ---each coming from the corresponding observation in Example~\ref{ejemplominimalrepresentation} about the uniqueness of the basis elements $w_2$ and $w_3$:
$$
\begin{pmatrix} 1 & 0 & 1 \\ 0 & 0 & 1 \\ 1\begin{picture}(0,0)\put(-2.5,-4){\qbezier(0,0)(7,-5)(14,0)} \put(13,-2){\vector(1,1){0}}\end{picture} & 1 & 0 \end{pmatrix}\leadsto
\begin{pmatrix} 1 & 1\begin{picture}(0,0)(1,-15)\put(-2.5,-4){\qbezier(0,0)(7,5)(14,0)} \put(13,-6){\vector(1,-1){0}}\end{picture} & 1 \\ 0 & 0 & 1 \\ 1 & 0 & 0 \end{pmatrix}\leadsto
\begin{pmatrix} 1 & 1 & 0 \\ 0 & 0 & 1 \\ 1 & 0 & 0 \end{pmatrix}
$$

As every simplex in the flag complex $F(\mathbb{F}_q^n)$ is a face of some maximal flag, we can partition the set of simplices in the flag complex by considering the first time a simplex appears as a face of a maximal flag, according to the lexicographic order in $\Sigma_n$. Explicitly:

For each label $i$, let $f_{n-1}^i$ denote any maximal flag that has label $i$. We define the set
\begin{multline*}
X_i = \left\lbrace f_k \in F(\mathbb{F}_q^n) \,\big\lvert\,  
 \text{ there is some } f_{n-1}^i \text{ with } f_k\subseteq f_{n-1}^i,\right.\\\left. \text{ but } f_k \nsubseteq f_{n-1}^j \text{ for any }f_{n-1}^j \text{ with } j<i
\right\rbrace.
\end{multline*}
We will prove that, for a flag $f_k$, the label $i$ with $f_k\in X_i$ can be explicitly described by the procedure below. Indeed, we will see in fact that the procedure actually describes the minimal matrix representation of the first maximal flag containing $f_{k}$ as a face.

Let $f_k=V_{d_1} \subsetneq V_{d_2} \subsetneq \cdots \subsetneq V_{d_k}$ where $\dim V_{d_j} = d_j$, then{:}
\begin{itemize}
\item[\it (a)] Select a set of $n$ linearly independent vectors $v_1, \dots, v_n$ such that $v_1, \dots v_{d_j}$ span the vector space $V_{d_j}$ for $j=1,\cdots, k+1$. (Here we set $d_{k+1}=n$, so that $V_{d_{k+1}}= \mathbb{F}_q^n$).  
\item[\it (b)] Let $w_j$ ($1\leq j\leq n$) be the $j$th column of the matrix representation of the maximal flag\rule{.2mm}{0mm} $g_{n-1}=W_1\subsetneq\cdots\subsetneq W_{n-1}$, where $W_k=\langle v_1,\ldots, v_k\rangle$, for $k=1,\ldots,n-1$. Divide the columns of $M(g_{n-1})$ into $k+1$ blocks so that columns in the first $j$ blocks span $V_{d_j}$ for each $j=1$, $\dots$, $k+1$:
\[
\quad\Big[ 
w_{1} \dots w_{d_1} {\big\lvert} w_{d_1+1} \dots w_{d_2} {\big\lvert} \quad\cdots\quad {\big\lvert} w_{d_{k-1}+1} \dots w_{d_k} {\big\lvert} w_{d_k+1} \dots w_n \Big]
\]
\item[\it (c)] By applying elementary column operations within blocks (which does not change the spanned vector spaces $V_{d_j}$), we can go further and produce zeros on the entries to the left, and within the same block, of each pivot. Finally, we reorder the columns within each block so that pivots appear from top to bottom, i.e.~so that, for a pair of consecutive columns in a common block, the row index of the pivot for the column on the right is larger than the row index of the pivot for the column on the left.
\end{itemize}
We now prove that the resulting matrix does not depend on the vectors $v_1,\ldots, v_n$ chosen at step \emph{(a)}.

\begin{proposition}\label{uniquenessminimal}
Let $u_1, \ldots, u_n$ be another set of vectors satisfying (a) and let $v'_1, \ldots, v'_n$ and $u'_1, \ldots, u'_n$ be the column vectors of the respective matrices given by the above procedure. Then $v'_{j}= u'_{j}$ for each $j=1, \ldots, n$.
\end{proposition}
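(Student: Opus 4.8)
The plan is to identify the matrix returned by the procedure with an object that visibly depends only on $f_k$, so that the dependence on $v_1,\dots,v_n$ disappears. For a nonzero $v\in\mathbb{F}_q^n$ let $p(v)$ be the row index of its lowest nonzero entry, and call a basis $b_1,\dots,b_m$ of a subspace $U\subseteq\mathbb{F}_q^n$ \emph{reduced} when $p(b_1)<\dots<p(b_m)$, the $p(b_i)$-entry of $b_i$ is $1$, and the $p(b_i)$-entry of $b_j$ is $0$ for $j\neq i$. Reversing the order of the coordinates and appealing to the uniqueness of reduced row echelon form shows that $U$ has exactly one reduced basis, to be denoted $R(U)$; the same argument shows that any basis of $U$ with pairwise distinct values $p(\cdot)$ realizes the same set of pivot rows, and that this set equals $\{\,p(v):v\in U\setminus\{0\}\,\}$, so it is an invariant of $U$.

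First I would record that the output matrix $N=[\,v'_1\mid\dots\mid v'_n\,]$ retains these features of the minimal matrix representation $M(g_{n-1})$ it is built from: \emph{(1)} for each $b=1,\dots,k+1$, the first $d_b$ columns of $N$ span $V_{d_b}$; \emph{(2)} every column has lowest nonzero entry equal to $1$, in a row we call its pivot row, and the $n$ pivot rows are pairwise distinct; \emph{(3)} within each block the pivot rows increase from left to right, and the pivot row of a column of a block carries a $0$ in every other column of that block; and \emph{(4)} the condition \emph{(iii)} persists, i.e.\ $(v'_{c'})_r=0$ whenever $r$ is the pivot row of $v'_c$ and $c'>c$. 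Items \emph{(1)}--\emph{(3)} are straightforward: column operations inside a block change neither the spanned subspaces nor the lowest nonzero entries, the clearing step of \emph{(c)} installs the within-block zeros, and the final reordering orders the pivots. Item \emph{(4)} is the delicate point. Sweeping through the blocks from left to right and treating \emph{(iii)} as an invariant, I would verify that a clearing move---replacing the $c_2$-th column by itself plus a multiple of the $c_1$-th column, where $c_2<c_1$ lie in a common block---cannot destroy \emph{(iii)}: the move does not change the pivot row of the $c_2$-th column, so the only entry it could spoil is $(v'_{c_2})_r$ for a pivot row $r$ of some $v'_c$ with $c<c_2$, and this entry changes by a multiple of $(v'_{c_1})_r$, which is $0$ by \emph{(iii)} because $c<c_1$; and I would verify that a subsequent reordering of a block cannot destroy \emph{(iii)} either, precisely because the clearing step has already made, for any two columns of a block, the entry of each in the pivot row of the other equal to $0$.

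Granting \emph{(1)}--\emph{(4)}, I would pin down the columns of $N$ block by block. Put $P_0=\emptyset$ and suppose that blocks $1,\dots,b-1$ of $N$ have been shown to depend only on $f_k$; let $P_{b-1}$ be the (hence determined) set of their pivot rows and set $C_b=\{\,v\in V_{d_b}: v_r=0\ \text{for all}\ r\in P_{b-1}\,\}$. By \emph{(4)}, every column of block $b$ vanishes in the pivot rows of all earlier columns, so it lies in $C_b$. The first $d_{b-1}$ columns form a basis of $V_{d_{b-1}}$ with pairwise distinct pivot rows (by \emph{(2)}), so those rows make up exactly $P_{b-1}=\{\,p(v):v\in V_{d_{b-1}}\setminus\{0\}\,\}$; hence the coordinate projection $V_{d_{b-1}}\to\mathbb{F}_q^{P_{b-1}}$ is injective, so an isomorphism, and $\dim C_b=d_b-d_{b-1}$. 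Therefore the $d_b-d_{b-1}$ linearly independent columns of block $b$ form a basis of $C_b$, which by \emph{(2)}--\emph{(3)} is reduced; thus block $b$ equals $R(C_b)$. Since $C_b$ is determined by $V_{d_b}$ and $P_{b-1}$, and $P_{b-1}$ by blocks $1,\dots,b-1$, an induction on $b$ with base case $C_1=V_{d_1}$ shows that $N$ depends only on $f_k$; in particular the outputs produced from $v_1,\dots,v_n$ and from $u_1,\dots,u_n$ coincide. The main obstacle is exactly item \emph{(4)}---checking that the echelon condition \emph{(iii)} is not destroyed by the within-block clearings and reorderings; granting that, everything else reduces to the uniqueness of reduced bases.
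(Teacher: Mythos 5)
Your argument is correct, but it follows a genuinely different route from the paper's. The paper proves uniqueness by a column-by-column induction: assuming $v'_t=u'_t$ for $t<j$, it expands $v'_j$ in the basis $u'_1,\dots,u'_{d_\ell}$ of the block's subspace and eliminates coefficients, first using the zeros to the right of the pivots of the already-identified columns, then using the top-to-bottom ordering of pivots within the block (and the normalization of pivots) to force $v'_j=u'_j$. You instead characterize the output matrix intrinsically: each block is the unique reduced column-echelon basis of the canonically defined subspace $C_b=\{v\in V_{d_b}: v_r=0\ \text{for all}\ r\in P_{b-1}\}$, so well-definedness reduces to uniqueness of reduced echelon form; your dimension count $\dim C_b=d_b-d_{b-1}$ via the projection of $V_{d_{b-1}}$ onto its pivot coordinates is correct. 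It is worth noting that the paper's own induction also uses, without comment, that condition \emph{(iii)} (zeros to the right of every pivot) survives step \emph{(c)} --- precisely your item \emph{(4)} --- so your explicit check that the within-block clearings and reorderings preserve \emph{(iii)} supplies a detail the paper leaves implicit; the only point you should make explicit there is that a clearing move is performed only when the entry to be cleared is nonzero, which forces the pivot of the right-hand column to sit strictly above that of the left-hand one, so the latter's pivot row is indeed unchanged. As for what each approach buys: the paper's induction is shorter and needs no auxiliary notion, while yours yields a procedure-free, block-by-block description of $M(f_k)$ (each block the reduced basis of $C_b$), which makes the invariance transparent and could also be used to streamline the later structural arguments of this section (e.g.\ Remarks~\ref{chica} and~\ref{basicamentelaunicidad} and Proposition~\ref{removing}).
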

\begin{proof} 
Assume inductively that $v'_k=u'_k$ for $k=1,\ldots,j-1$ (the induction starts with $j=1$ whose hypothesis is vacuously true). Say $v'_{j}$ and $u'_{j}$ lie in the $\ell$-th block of their corresponding matrices, i.e., $v'_{j}, u'_{j}\in V_{d_{\ell}}$.
Note that $V_{d_{\ell}}=\langle u'_1, \dots, u'_{d_{\ell-1}}, u'_{d_{\ell-1}+1}, \ldots, u'_j, \ldots, u'_{d_\ell} \rangle$, so there exists coefficients $\alpha_1, \ldots, \alpha_{d_{\ell}} \in~~ \mathbb{F}_{q}$ such that 
\begin{eqnarray*}
\nonumber
v'_{j}&=&\alpha_{1}u'_{1} + \cdots + \alpha_{d_{\ell-1}}u'_{d_{\ell-1}} + \cdots + \alpha_{j-1}u'_{j-1}+\alpha_{j}u'_{j} + \cdots +\alpha_{d_\ell}u'_{d_{\ell}}\\
&=&\alpha_{1}v'_{1} + \cdots + \alpha_{d_{\ell-1}}v'_{d_{\ell-1}} + \cdots + \alpha_{j-1}v'_{j-1}+ \alpha_{j}u'_{j} + \cdots +\alpha_{d_\ell}u'_{d_{\ell}}. 
\end{eqnarray*}
Since there are zeros to the right of the pivot of each of the vector columns $u'_1=v'_1, \ldots, u'_{j-1}=v'_{j-1},$ we recursively get $\alpha_1=\cdots =\alpha_{j-1}=0$. The resulting simplified equality \begin{equation}\label{simpliequali} v'_{j}= \alpha_{j}u'_{j} + \cdots +\alpha_{d_\ell}u'_{d_{\ell}},\end{equation} and the fact that pivots within blocks have been ordered from top to bottom then imply that the row index of the pivot of $v'_{j}$ cannot be smaller than the row index of the pivot of $u'_{j}$. But the roles of the $u'_k$'s and the $v'_k$'s can be interchanged in the argument, so that in fact the row indexes of the pivots of $u'_j$ and $v'_j$ agree. In these conditions,~(\ref{simpliequali}) and the indicated ordering of pivots further yield $\alpha_{j+1}=\cdots =\alpha_{d_{\ell}}=0$. In summary, $v'_{j}=\alpha_{j}u'_{j}$. 
Finally, $\alpha_{j}=1$ so that $v'_{j}= u'_{j}$, because pivots have been normalized to have value $1$.
\end{proof}

The matrix $M(f_{k})$ obtained by the procedure in steps \emph{(a)--(c)} is thus well-defined, and will be called the \emph{minimal matrix representation} of $f_k$. As in the case of a maximal flag, the label produced by the pivot positions in the resulting matrix yields the label we associate to $f_k$ and to $M(f_{k})$.

If the initial flag $f_k$ were maximal, then step \emph{(c)} above would be vacuous and the process in \emph{(a)--(b)} would reduce to the process in \emph{(i)--(iii)}. In other words, the process in \emph{(a)--(c)} generalizes the process in \emph{(i)--(iii)}. This justifies the fact that we have used the same name for the matrices (and labels) given by both processes. 

\begin{remark}\label{minmatrep} Let $f_{k}$ and $f_{n-1}$ be flags (the latter one being maximal) with $f_{k}$ a face of $f_{n-1}$, and let $g_{n-1}$ be the maximal flag determined by $M(f_k)$, i.e., the maximal flag whose $j$-th vertex is the vector space spanned by the first $j$ vector columns of $M(f_{k})$. It is obvious that $f_k$ is also a face of $g_{n-1}$. A main goal (Theorem~\ref{minimalidad} below) of this section is to prove that the label associated to $g_{n-1}$ is no larger (and even strictly smaller) than the label associated to $f_{n-1}$ (as long as $f_{n-1}\neq g_{n-1}$).
\end{remark}

\begin{example}
For the $2$-flag $f_2=\langle e_1+e_3\rangle \subsetneq \langle e_2+e_3, e_1+e_2, e_2 \rangle$ in $F(\mathbb{F}_2^4)$, the process in \emph{(a)--(c)\rule{.5mm}{0mm}} can start with $w_1=e_1+e_3$, $w_2=e_2+e_3$, $w_3=e_2$ and $w_4=e_4$, so that the matrix coming from the step \emph{(b)\rule{.5mm}{0mm}} is 
\[\left(\begin{array}{c|cc|c}
1 & 1 & \bf{1} & 0 \\
0 & \bf{1} & 0 & 0 \\
\bf{1} & 0 & 0 & 0 \\
0 & 0 & 0 & \bf{1}
\end{array}\right),
\]
For step \emph{(c)} we only need to work on the second block. First we produce a zero entry to the left of the pivot with position $(1,3)$ by adding the third column to the second one to get
\[\left(\begin{array}{c|cc|c}
1 & 0 & \bf{1} & 0 \\
0 & \bf{1} & 0 & 0 \\
\bf{1} & 0 & 0 & 0 \\
0 & 0 & 0 & \bf{1}
\end{array}\right).
\]
Second, we interchange the columns of the second block to get the matrix associated to $f_{2}$:
\[\left(\begin{array}{c|cc|c}
1 & 1 & 0 & 0 \\
0 & 0 & 1 & 0 \\
1 & 0 & 0 & 0 \\
0 & 0 & 0 & 1
\end{array}\right).
\]
The corresponding associated label is 3124. Likewise, the reader can easily check that the matrix associated to the $2$-flag $\varphi_2=\langle e_2, e_3\rangle \subsetneq \langle e_3, e_2, e_1+e_2 \rangle$ in $F(\mathbb{F}_2^4)$ is
\[
\left(\begin{array}{cc|c|c}
0 & 0 & 1 & 0 \\
1 & 0 & 0 & 0 \\
0 & 1 & 0 & 0 \\
0 & 0 & 0 & 1
\end{array}\right),
\]
with label 2314. We next explain in general terms that in fact $f_2\in X_{3124}$ and $\varphi_{2}\in X_{2314}$.
\end{example}

\begin{remark}\label{chica} Let $f_{k}$ and $f_{n-1}$ be as in Remark~\ref{minmatrep}. Start the process for constructing $M(f_{k})$ directly at step \emph{(c)} with the matrix representation $M(f_{n-1})$ of $f_{n-1}$. The operations needed in the first half of step \emph{(c)} do not change the distribution of pivots within blocks of $M(f_{n-1})$, while the operations needed in the second half of step \emph{(c)} can only decrease the labeling of each block. Therefore the label associated to $M(f_k)$ is less than or equal to the label associated to $M(f_{n-1})$. In other words, the label associated to $f_{k}$ is less than or equal to the label associated to $f_{n-1}$.
\end{remark}

\begin{remark}\label{basicamentelaunicidad} In Remark~\ref{chica}, if pivots within blocks of $M(f_{n-1})$ were already ordered from top to bottom, then no additional operations would be needed in order to attain the goal of step \emph{(c)}, so that $M(f_k)=M(f_{n-1})$, and consequently $g_{n-1}=f_{n-1}$, in the notation of Remark~\ref{minmatrep}.
\end{remark}

\begin{theorem}\label{minimalidad}
Let $f_k$ and $g_{n-1}$ be as in Remark~\ref{minmatrep}. Then the label $i\in \Sigma_{n}$ associated to $g_{n-1}$ (and $f_k$) satisfies $f_{k}\in X_{i}$. Indeed, not only is $f_{k}$ a face of $g_{n-1}$, but the label associated of $g_{n-1}$ is strictly smaller than the label associated of any other maximal flag having $f_{k}$ as a face.
\end{theorem}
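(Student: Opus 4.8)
The plan is to derive the theorem from Remarks~\ref{chica} and~\ref{basicamentelaunicidad}, after one preliminary observation: \emph{$g_{n-1}$ and $f_k$ carry the same label}. To see this I would verify that the matrix $M(f_k)$ produced by steps \emph{(a)--(c)} is, verbatim, the minimal matrix representation of the maximal flag $g_{n-1}$ whose $j$-th vertex is the span of the first $j$ columns of $M(f_k)$: property \emph{(i)} holds by the definition of $g_{n-1}$; property \emph{(ii)} is preserved by each column operation and each rearrangement used in step \emph{(c)}; and property \emph{(iii)} survives step \emph{(c)} because a within-block operation performed after step \emph{(b)} cannot resurrect an entry lying in a pivot row of an earlier block (such entries are already zero), while the first half of step \emph{(c)} leaves each block with a single nonzero entry in each of its own pivot rows, a feature the subsequent reordering merely permutes. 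By the uniqueness of the minimal matrix representation of a maximal flag we get $M(g_{n-1})=M(f_k)$, so $g_{n-1}$ and $f_k$ share a common label $i$.

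Granting this, the statement $f_k\in X_i$ is immediate: the maximal flag $g_{n-1}$ has label $i$ and has $f_k$ as a face (Remark~\ref{minmatrep}), while Remark~\ref{chica} shows that every maximal flag having $f_k$ as a face carries a label $\ge$ that of $f_k$, i.e.\ $\ge i$, so $f_k$ is a face of no maximal flag of label $j<i$.

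It remains to prove the strict inequality. Let $f_{n-1}$ be any maximal flag with $f_k\subseteq f_{n-1}$ and $f_{n-1}\neq g_{n-1}$; the goal is $i=\mathrm{label}(f_k)<\mathrm{label}(f_{n-1})$. The columns of $M(f_{n-1})$ form an admissible choice in step \emph{(a)} for $f_k$, so by Proposition~\ref{uniquenessminimal} we may compute $M(f_k)$ by running step \emph{(c)} directly on $M(f_{n-1})$, exactly as in Remark~\ref{chica}. By that Remark the first half of step \emph{(c)} leaves the collection of pivot rows within each block unchanged, while the second half rearranges the columns of each block so that these pivot rows appear in increasing order; reading off labels block by block then gives $i\le\mathrm{label}(f_{n-1})$ (a tuple of distinct integers is lexicographically $\ge$ its increasing rearrangement), with equality precisely when the pivots inside \emph{every} block of $M(f_{n-1})$ were already listed from top to bottom. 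In that case Remark~\ref{basicamentelaunicidad} forces $M(f_k)=M(f_{n-1})$ and hence $g_{n-1}=f_{n-1}$, contrary to hypothesis. Therefore $\mathrm{label}(g_{n-1})=i<\mathrm{label}(f_{n-1})$.

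I expect the main obstacle to be the preliminary observation: verifying carefully that $M(f_k)$ really is the minimal matrix representation of $g_{n-1}$, the delicate point being that property \emph{(iii)} must be checked to persist through the reordering in step \emph{(c)}, both within blocks and across blocks. Once that identification is in place, everything else is routine bookkeeping on top of Remarks~\ref{chica} and~\ref{basicamentelaunicidad}.
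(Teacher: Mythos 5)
Your proposal is correct and takes essentially the same route as the paper: the membership $f_k\in X_i$ and the weak inequality come from Remarks~\ref{minmatrep} and~\ref{chica}, while your strictness argument (label equality forces the pivots in every block of $M(f_{n-1})$ to be already ordered, whence $M(f_k)=M(f_{n-1})$ by Remark~\ref{basicamentelaunicidad} and so $f_{n-1}=g_{n-1}$) is precisely the content of Proposition~\ref{commonface}, which the paper cites for this step. Your preliminary verification that $M(f_k)$ literally satisfies \emph{(i)--(iii)} for $g_{n-1}$, so that the two carry the same label, is a point the paper leaves implicit, and you check it correctly.
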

\begin{proof} 
As noticed in Remarks~\ref{minmatrep} and~\ref{chica}, $f_k$ is a face of $g_{n-1}$, and the label $i\in\Sigma_n$ associated to these two flags is less than or equal to the label $j\in\Sigma_n$ associated of any other maximal flag $f_{n-1}$ having $f_{k}$ as a face. The next result shows that, in fact, $i<j$ whenever $f_{n-1}\neq g_{n-1}$.   
\end{proof}

\begin{proposition}\label{commonface}
Two different maximal flags with the same label $i\in\Sigma_n$ do not have a common face which lies in $X_i$.
\end{proposition}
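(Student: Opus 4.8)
The plan is to argue by contradiction. Suppose $f_{n-1}$ and $f'_{n-1}$ are two \emph{different} maximal flags carrying the common label $i\in\Sigma_n$, and suppose some common face $f_k$ of them lies in $X_i$. Let $g_{n-1}$ be the maximal flag determined by $M(f_k)$, as in Remark~\ref{minmatrep}. I will show that necessarily $f_{n-1}=g_{n-1}=f'_{n-1}$, contradicting $f_{n-1}\neq f'_{n-1}$.

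The first step is to identify the label of $f_k$. On one hand, Remark~\ref{chica} applied to the pair $f_k\subseteq f_{n-1}$ shows that the label of $f_k$ is at most the label of $f_{n-1}$, i.e.\ at most $i$. On the other hand, by Remark~\ref{minmatrep} the maximal flag $g_{n-1}$ has $f_k$ as a face, and its label agrees with the label of $f_k$ (the matrices $M(f_k)$ and $M(g_{n-1})$ have the same pivot positions, since $M(g_{n-1})$ is obtained from $M(f_k)$ by clearing entries to the right of pivots, which does not move pivots); as $f_k\in X_i$, no maximal flag containing $f_k$ has label strictly smaller than $i$, so the label of $g_{n-1}$, hence that of $f_k$, is at least $i$. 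Combining, the label of $f_k$ is exactly $i$, which is precisely the label of $f_{n-1}$.

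Next I would exploit this equality of labels. Running the construction of $M(f_k)$ starting from $M(f_{n-1})$, as in Remark~\ref{chica}, the operations in the first half of step~(c) do not change the distribution of pivots within blocks, while the reordering in the second half strictly lowers the label of any block whose pivots are not already arranged from top to bottom. Since the final label equals $i$, the label of $M(f_{n-1})$, no block can change; thus the pivots within every block of $M(f_{n-1})$ are already ordered from top to bottom, and Remark~\ref{basicamentelaunicidad} gives $M(f_k)=M(f_{n-1})$ and therefore $g_{n-1}=f_{n-1}$. The identical argument with $f'_{n-1}$ in place of $f_{n-1}$ (the previous two paragraphs used only that $f_{n-1}$ is a maximal flag of label $i$ containing $f_k$, which also holds for $f'_{n-1}$) yields $g_{n-1}=f'_{n-1}$. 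Hence $f_{n-1}=g_{n-1}=f'_{n-1}$, a contradiction.

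The step I expect to be the most delicate is the passage, in the third paragraph, from ``the labels of $f_k$ and $f_{n-1}$ coincide'' to ``the pivots within every block of $M(f_{n-1})$ are already ordered'': this forces one to track exactly how the two halves of step~(c) affect the label, which is the content of Remark~\ref{chica}, and then to note that equality of labels can occur only in the situation covered by Remark~\ref{basicamentelaunicidad}. Everything else is routine bookkeeping with the definition of $X_i$ and with the minimality properties of the matrix representation established earlier in the section.
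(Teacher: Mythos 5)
Your proof is correct and follows essentially the same route as the paper: both arguments force the pivots within the blocks (determined by $f_k$) of the matrix representation of each of the two maximal flags to be already ordered from top to bottom---since otherwise the label of $f_k$ would drop strictly below $i$, contradicting $f_k\in X_i$---and then invoke Remark~\ref{basicamentelaunicidad} to identify each maximal flag with the flag determined by $M(f_k)$. The only difference is presentational: you first pin down that the label of $f_k$ equals $i$ (via Remark~\ref{chica} and the definition of $X_i$) before concluding, whereas the paper runs the same contradiction directly.
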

\begin{proof}
Suppose that $f_{n-1}$ and $g_{n-1}$ are maximal flags sharing label $i\in\Sigma_n$ as well as a face $f_k$ which lies in $X_i$. Then pivots within blocks of both $M(f_{n-1})$ and $M(g_{n-1})$ (in the division in $k+1$ blocks as in step \emph{(b)} of the construction of $M(f_k)$) are forced to be ordered from top to bottom (otherwise $f_k$ would lie in a $X_j$ with $j<i$). As observed in Remark~\ref{basicamentelaunicidad}, this means $M(f_{n-1})=M(f_k)=M(g_{n-1})$, so $f_{n-1}=g_{n-1}$.
\end{proof}

We close this section by capturing the extent to which an $X_i$ fails to be a subcomplex of $F(\mathbb{F}^n_q)$. The resulting characterization plays a key role in the identification of the main properties of the gradient vector field described in the next section.

\begin{proposition}\label{removing} Let $f_{n-1}=V_1\subsetneq V_2 \subsetneq \cdots \subsetneq V_{n-1}$ be a maximal flag with label $i=i_1 i_2 \cdots i_n\in\Sigma_n$. Then the face of $f_{n-1}$ obtained by deleting any set of vertices $V_{\ell_1},\ldots,V_{\ell_m}$ from $f_{n-1}$ lies in $X_i$ if and only if
\begin{equation}\label{condimenor}
i_{\ell_j}<i_{\ell_j+1}
\end{equation}
{for each $j=1,...,m$}. 

\begin{remark}
We are using the term ``face'' for any \emph{non-empty} subset of a simplex. Thus, even if $i$ is taken in
Proposition~\ref{removing} as the minimal label $12\cdots n$ (in which case~(\ref{condimenor}) always holds), it is implicitly assumed that not all vertices of $f_{n-1}$ are to be removed. Actually, the special case of the label $12\cdots n$ plays a subtle role in identifying critical cells of the gradient field that will be constructed in the next section.
\end{remark}
\begin{proof}[Proof of Proposition~\ref{removing}]
For $j\in\{1,\ldots,n\}$ let $w_j$ denote the $j$th column vector of $M(f_{n-1})$. We prove the sufficiency of~(\ref{condimenor}) by induction over the number of vertices removed. At the start of the induction, where we remove only one vertex, say $V_{\ell_1}$, the minimal matrix representation for the resulting face is obtained from 
\[
\Big[ 
w_{1}\big\lvert \cdots \big\lvert w_{\ell_1-1} \big\lvert w_{\ell_1} w_{\ell_1+1} \big\lvert \cdots \big\lvert w_n
\Big],
\]
the matrix representation of $f_{n-1}$, after partitioning it into the $n-1$ indicated blocks. Since $i_{\ell_1}<i_{\ell_1+1}$, the only block of two vectors has its pivots already ordered from top to bottom, so the conclusion follows directly from Remark~\ref{basicamentelaunicidad}. The inductive step is completely similar. Suppose we have a set of $m+1$ vertices $V_{\ell_1},\ldots,V_{\ell_{m+1}}$ to be removed that satisfy~(\ref{condimenor}). After removing the last $m$ vertices, we end up with a matrix of the form
\[
\Big[ 
w_{1}\big\lvert \cdots \big\lvert w_{\ell_1-1} \big\lvert w_{\ell_1} \big\lvert B_1 \big\lvert \cdots \big\lvert B_d
\Big]
\]
where, by induction, pivots within blocks $B_j$ (some of which could consist of only one vector) are ordered from top to bottom. Removing the vertex $V_{\ell_1}$ merges the 1-column block $w_{\ell_1}$ with the block $B_1$:
\begin{equation}\label{readoff}
\Big[ 
w_{1}\big\lvert \cdots \big\lvert w_{\ell_1-1} \big\lvert w_{\ell_1} B_1 \big\lvert \cdots \big\lvert B_d\Big].
\end{equation}
But pivots in $B_1$ are ordered from top to bottom, so that the assumption $i_{\ell_1}<i_{\ell_1+1}$ implies that the corresponding condition on pivots also holds for the new block $w_{\ell_1} B_1$. Therefore the conclusion follows again from Remark~\ref{basicamentelaunicidad}.

The reciprocal follows from Theorem~\ref{minimalidad}: any face of $f_{n-1}$ in $X_i$ has the same minimal matrix representation as that of $f_{n-1}$. 
\end{proof}
\end{proposition}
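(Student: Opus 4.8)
The plan is to reduce the statement to a single computation of the label attached to the face of $f_{n-1}$ obtained by deleting $V_{\ell_1},\ldots,V_{\ell_m}$, call it $f_k$, and then to invoke Theorem~\ref{minimalidad}, which together with the fact that the sets $X_j$ partition the simplices of $F(\mathbb{F}_q^n)$ says that $f_k\in X_i$ if and only if the label read off $M(f_k)$ equals $i$. The key simplification is that, by Proposition~\ref{uniquenessminimal}, the procedure \emph{(a)--(c)} for $f_k$ may be run starting from $M(f_{n-1})$ itself: its columns $w_1,\ldots,w_n$ meet requirement \emph{(a)}, so step \emph{(b)} merely subdivides them into the $k+1$ blocks $[\,w_1\cdots w_{d_1}\mid\cdots\mid w_{d_k+1}\cdots w_n\,]$ cut out by the surviving dimensions $d_1<\cdots<d_k$, where $\{d_1,\ldots,d_k\}=\{1,\ldots,n-1\}\setminus\{\ell_1,\ldots,\ell_m\}$ (with $d_0=0$ and $d_{k+1}=n$). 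Since the pivot of $w_j$ lies in row $i_j$, everything is now controlled by the permutation $i$, and the only operation in step \emph{(c)} capable of changing the label is the within-block reordering, because by Remark~\ref{chica} the entry-clearing half of step \emph{(c)} does not move pivots.

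The combinatorial heart of the argument, and the step I expect to need the most care, is the identification between the block subdivision above and the positions $\ell_1,\ldots,\ell_m$. In the $r$-th block $w_{d_{r-1}+1}\cdots w_{d_r}$, the columns other than the last one are exactly the $w_\ell$ with $\ell\in\{d_{r-1}+1,\ldots,d_r-1\}$, and each such $\ell$ is one of the removed indices; conversely, every removed index $\ell$ lies strictly between two consecutive surviving dimensions, so that both $\ell$ and $\ell+1$ are columns of a common block. Consequently every block has its pivots arranged from top to bottom precisely when $i_\ell<i_{\ell+1}$ holds for every removed index $\ell$, i.e.\ precisely when~(\ref{condimenor}) holds. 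The subtlety to watch here is that a removed index and its successor may each be either a removed or a surviving column while still landing in the same block, so the bookkeeping must be carried out on the level of column indices rather than of vertices.

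Granting this identification, the two directions are immediate. If~(\ref{condimenor}) holds, every block of $M(f_{n-1})$ is already ordered, so by Remark~\ref{basicamentelaunicidad} step \emph{(c)} changes nothing: $M(f_k)=M(f_{n-1})$ and the maximal flag $g_{n-1}$ determined by $M(f_k)$ equals $f_{n-1}$, whence $f_k\in X_i$ by Theorem~\ref{minimalidad}. If~(\ref{condimenor}) fails at some $\ell_j$, then --- using that $i$ is a permutation, so $i_{\ell_j}\neq i_{\ell_j+1}$ forces $i_{\ell_j}>i_{\ell_j+1}$ --- the columns $\ell_j$ and $\ell_j+1$ form a descent inside a common block, so that block is genuinely reordered in step \emph{(c)}. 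Since the label $i'$ read off $M(f_k)$ is obtained from $i$ by sorting each block's subword into increasing order, and sorting a contiguous subword that is not already increasing strictly decreases the permutation lexicographically, we get $i'<i$; as $f_k\in X_{i'}$ by Theorem~\ref{minimalidad}, it follows that $f_k\notin X_i$, which is the contrapositive of the necessity of~(\ref{condimenor}).

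Finally, I would note that the sufficiency can equally be organized as an induction on $m$, where deleting one more vertex merges a one-column block with an adjacent block and Remark~\ref{basicamentelaunicidad} is reapplied at each stage; the two presentations share only the single point that the entry-clearing in step \emph{(c)} is harmless, which is already recorded in Remark~\ref{chica}.
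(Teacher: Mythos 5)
Your proposal is correct and follows essentially the same route as the paper's proof: the same block subdivision of $M(f_{n-1})$ into the pieces cut out by the surviving dimensions, together with Remark~\ref{chica}, Remark~\ref{basicamentelaunicidad} and Theorem~\ref{minimalidad}, carries both directions. The only differences are organizational --- you prove sufficiency in one global step (noting yourself the inductive variant the paper uses) and, for necessity, you make the strict lexicographic drop of the label explicit via the block-sorting observation rather than quoting the strict minimality in Theorem~\ref{minimalidad} to force $M(f_k)=M(f_{n-1})$; both are sound.
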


\begin{corollary}\label{corollary1} If $i$ is the maximal label, then $X_i$ is the set of all those maximal flags with label $i$.
\begin{proof}
Condition~(\ref{condimenor}) never holds for the maximal label $n\, (n-1)\, \cdots \,2\, 1$. So no proper face of a maximal flag with label $i$ can lie in $X_i$, in view of Proposition~\ref{removing}.
\end{proof}
\end{corollary}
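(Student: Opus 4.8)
The plan is to establish the claimed equality by proving two inclusions, writing $i=n\,(n-1)\,\cdots\,2\,1$ for the maximal label. One inclusion is immediate: if $f_{n-1}$ is a maximal flag with label $i$, then the only maximal flag having $f_{n-1}$ as a face is $f_{n-1}$ itself, whose label is $i$; since $i$ is the top element of $\Sigma_n$ there is no maximal flag of strictly smaller label containing $f_{n-1}$, so $f_{n-1}\in X_i$ directly from the definition of $X_i$. Thus every maximal flag with label $i$ belongs to $X_i$.

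For the reverse inclusion I would take $f_k\in X_i$ and show it must be a maximal flag of label $i$. By definition of $X_i$ there is a maximal flag $f_{n-1}^i$ of label $i$ with $f_k$ a face of it. Assume, for contradiction, that $f_k$ is a proper face; then $f_k$ is obtained from $f_{n-1}^i$ by deleting a non-empty (but not total) set of vertices $V_{\ell_1},\ldots,V_{\ell_m}$, and Proposition~\ref{removing} applies, forcing $i_{\ell_j}<i_{\ell_j+1}$ for each $j$. This is impossible, since the permutation $n\,(n-1)\,\cdots\,2\,1$ is strictly decreasing and hence condition~(\ref{condimenor}) fails for every choice of deleted index. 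Therefore $f_k=f_{n-1}^i$, a maximal flag with label $i$, which completes the proof.

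The argument is short, and essentially all of its weight rests on Proposition~\ref{removing}; the only point requiring a moment's care is to confirm that the hypotheses of that proposition are actually met here, namely that a proper face $f_k$ of $f_{n-1}^i$ omits at least one vertex while, being itself a genuine flag, retaining at least one, so that the non-empty-face convention recalled in the remark preceding Proposition~\ref{removing} is respected. Beyond this bit of bookkeeping I do not anticipate any genuine obstacle.
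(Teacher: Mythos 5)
Your proposal is correct and follows essentially the same route as the paper: the key step in both is that condition~(\ref{condimenor}) can never hold for the strictly decreasing maximal label, so Proposition~\ref{removing} rules out proper faces lying in $X_i$. You merely spell out explicitly the easy inclusion (that maximal flags of maximal label do lie in $X_i$), which the paper leaves implicit.
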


\section{The Pairing}

We start by noticing that the hypothesis in Lemma~\ref{lemmapairing} holds for the partition $\{X_i\}_{i\in \Sigma_n}$. We then construct a suitable acyclic pairing for each $X_i$.
The following result is an immediate consequence of Theorem~\ref{minimalidad} (and, in fact, of the definition of the collections $X_i$).
\begin{lemma}\label{subcx}
For each $i\in \Sigma_n$, $\bigcup_{j \leq i} X_j$ is a subcomplex of $X$.
\end{lemma}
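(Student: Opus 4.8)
The plan is to unwind the definitions and reduce the statement to two easily-checked facts: that the collection $\{X_i\}_{i\in\Sigma_n}$ partitions the simplices of $F(\mathbb{F}_q^n)$, and that for a fixed label $i$, the union $\bigcup_{j\le i}X_j$ is closed under passing to faces. The first of these is immediate from the construction: every simplex $f_k$ of $F(\mathbb{F}_q^n)$ is a face of some maximal flag, hence of some labeled maximal flag, and $X_i$ was defined precisely as the set of flags whose \emph{smallest-labeled} containing maximal flag has label $i$; distinct labels therefore give disjoint sets, and their union is everything. (One may also invoke Theorem~\ref{minimalidad}, which pins down explicitly the label $i$ with $f_k\in X_i$, namely the label of the maximal flag $g_{n-1}$ determined by $M(f_k)$.)

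For the closure property, I would argue as follows. Let $f_k\in\bigcup_{j\le i}X_j$, so $f_k\in X_{j_0}$ for some $j_0\le i$, and let $f_{k-1}$ be any face of $f_k$, say $f_{k-1}=f_k\setminus W$ for some $W\in f_k$ (it suffices to treat codimension-one faces, since a general face is obtained by iterating). By definition of $X_{j_0}$ there is a maximal flag $f_{n-1}^{j_0}$ with label $j_0$ containing $f_k$; then $f_{n-1}^{j_0}$ also contains $f_{k-1}$. Hence the smallest label among maximal flags containing $f_{k-1}$ is at most $j_0$: that is, $f_{k-1}\in X_{j'}$ for some $j'\le j_0\le i$, so $f_{k-1}\in\bigcup_{j\le i}X_j$. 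This shows $\bigcup_{j\le i}X_j$ is a subcomplex. Since the ambient complex is denoted $X=F(\mathbb{F}_q^n)$ in the statement, this is exactly the assertion.

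The only subtlety worth flagging — and the step I expect to require the most care — is the direction of the inequality: removing a vertex from a flag can only \emph{enlarge} the family of maximal flags containing it, hence can only \emph{decrease} the minimal label, which is why we land in $\bigcup_{j\le i}X_j$ rather than risk escaping it. Concretely this is the content of Remark~\ref{chica} (passing to a face does not increase the associated label) together with Theorem~\ref{minimalidad}, so I would phrase the proof as a one-line deduction: for a face $f_{k-1}$ of $f_k$, the label of the minimal maximal flag containing $f_{k-1}$ is $\le$ the label of the minimal maximal flag containing $f_k$, by Remark~\ref{chica} applied to the chain of inclusions, whence $f_{k-1}\in X_{j'}$ with $j'\le i$. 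No genuine obstacle remains; the result is, as the text says, essentially a restatement of the definition of the $X_i$ combined with Theorem~\ref{minimalidad}.
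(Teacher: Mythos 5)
Your argument is correct and coincides with the paper's: the paper states the lemma is an immediate consequence of Theorem~\ref{minimalidad} and, in fact, of the very definition of the $X_i$, and your face-closure argument (a face of $f_k$ lies in every maximal flag containing $f_k$, so its minimal label can only decrease) is precisely that definitional observation spelled out. Your closing appeal to Remark~\ref{chica} is not even needed, since the direct argument you already gave suffices.
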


For a label $i=i_{1}i_{2}\cdots i_{n}\in \Sigma_{n}$ different from the maximal label $n(n-1)\cdots 1$ let $j_{i}$ stand for the smallest integer $t\in \{1,2,\ldots n-1\}$ such that $i_{t}<i_{t+1}$. In addition, for a maximal flag $f_{n-1}$ with label $i$, let $V_{f_{n-1}}$ be the $j_{i}$-th vertex of $f_{n-1}$, i.e.~the vector space spanned by the first $j_{i}$ columns in the matrix representation of $f_{n-1}$.

\begin{proposition}\label{vertex}
For each label $i\in \Sigma_{n}$ which is neither minimal nor maximal, and for each maximal flag $f_{n-1}$ with label $i$, the 1-flag $V_{f_{n-1}}$ lies in $X_{k}$, for a label $k\in \Sigma_{n}$ with $k<i$.
\end{proposition}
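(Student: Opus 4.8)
The plan is to argue that the single-vertex flag $V_{f_{n-1}}$ cannot belong to $X_i$, and then to use the general bound of Remark~\ref{chica} together with Theorem~\ref{minimalidad} to conclude that it must belong to some $X_k$ with $k<i$.

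Write $f_{n-1}=V_1\subsetneq\cdots\subsetneq V_{n-1}$ with label $i=i_1\cdots i_n$. By definition $V_{f_{n-1}}=V_{j_i}$, and as a face of $f_{n-1}$ it is obtained by deleting the vertices $V_\ell$ for every $\ell\in\{1,\ldots,n-1\}\setminus\{j_i\}$. First I would observe that $V_{f_{n-1}}\notin X_i$: were it in $X_i$, then by Proposition~\ref{removing} we would have $i_\ell<i_{\ell+1}$ for every deleted index $\ell$; but $i_{j_i}<i_{j_i+1}$ by the very definition of $j_i$; hence $i_\ell<i_{\ell+1}$ would hold for all $\ell\in\{1,\ldots,n-1\}$, i.e.\ $i$ would be the minimal label $12\cdots n$, contrary to hypothesis. (The hypothesis that $i$ is neither minimal nor maximal also guarantees $n\geq 3$, so that $j_i$ is defined and there is at least one vertex to delete.)

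To finish, I would invoke Remark~\ref{chica}: since $V_{f_{n-1}}$ is a face of the maximal flag $f_{n-1}$, the label $k$ associated to $V_{f_{n-1}}$ (that is, the label read off its minimal matrix representation $M(V_{f_{n-1}})$) satisfies $k\leq i$. By Theorem~\ref{minimalidad}, this same $k$ is the index of the unique class $X_k$ containing $V_{f_{n-1}}$. Since we just showed $V_{f_{n-1}}\notin X_i$, we must have $k\neq i$, hence $k<i$, as claimed.

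I do not expect a genuine obstacle here; the only point needing a little care is recognizing that the ``missing'' position $j_i$ in the application of Proposition~\ref{removing} is automatically an ascent, which is precisely what makes the contradiction with the non-minimality of $i$ go through.
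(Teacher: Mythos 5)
Your argument is correct, and it is not circular: Proposition~\ref{removing}, Remark~\ref{chica} and Theorem~\ref{minimalidad} are all established in Section~3, before this proposition is needed. It does, however, take a genuinely different route from the paper. You first rule out $V_{f_{n-1}}\in X_i$ by applying the ``only if'' direction of Proposition~\ref{removing} to the deletion of all vertices of $f_{n-1}$ other than the $j_i$-th one: membership in $X_i$ would force an ascent at every deleted position, and together with the ascent at $j_i$ this would make $i$ the minimal label, contradicting the hypothesis. You then get $k\leq i$ from Remark~\ref{chica}, membership $V_{f_{n-1}}\in X_k$ from Theorem~\ref{minimalidad}, and conclude $k<i$ because the classes $X_j$ are disjoint by construction. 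The paper instead argues directly on the minimal matrix representation $\big[B_1\,\big\vert\,B_2\big]$ of the $1$-flag $V_{f_{n-1}}$: comparing the pivot rows of $B_1$ (ordered top to bottom) with the first $j_i$ pivots of $M(f_{n-1})$ (ordered bottom to top) gives $i_1=k_{j_i}>\cdots>k_1$, which settles the case $j_i>1$ outright, and in the remaining case $j_i=1$ the assumption $i\leq k$ forces $k$ to be the minimal label $12\cdots n$, a contradiction. Your version buys economy and reuses the combinatorial criterion already proved, at the cost of resting on the ``reciprocal'' direction of Proposition~\ref{removing}; the paper's version is a self-contained pivot-level computation that also exhibits explicitly how the label of $V_{f_{n-1}}$ relates to the initial descending run of $i$. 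Your side remark that non-minimality and non-maximality force $n\geq 3$, so that $j_i$ exists and at least one vertex is actually deleted, is a worthwhile point of care.
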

\begin{proof}
Let $k\in \Sigma_{n}$ be the label associated to the minimal matrix representation of the 1-flag $V_{f_{n-1}}$. The latter matrix has the form
\[
\Big[ 
B_1 \big\lvert B_2
\Big].
\]
We have $V_{f_{n-1}}\in X_{k}$ in view of Theorem~\ref{minimalidad}, and we need to check that $k<i $. Put $i=i_{1}i_2\cdots i_n$ and $k=k_{1}k_2\cdots k_n$. Since the columns of $B_{1}$ are a basis of $V_{f_{n-1}}$, and since the pivots in each block $B_i$ are ordered from top to bottom, while the pivots in the first $j_i$ columns of the matrix representation of  $f_{n-1}$ are ordered from bottom to top, we actually have $i_{1}=k_{j_i}>k_{j_{i}-1}>\cdots >k_1$. Thus $i_1>k_1$ provided $j_{i}>1$,
in which case $i>k$. We can therefore assume $j_{i}=1$, so that $i_{1}=k_{1}$. Assume further, for a contradiction, that 
\begin{equation}\label{etiqueta}
i\leq k.
\end{equation}
In particular $i_2\leq k_2$. Since $k_2<k_3<\cdots<k_n$ and $i_1<i_2$ (by definition of $j_i$), we have in fact that $k_1=i_1<i_2\leq k_2<k_3<\cdots<k_n$, forcing $k$ to be the minimal label $12\cdots n$. This contradicts~(\ref{etiqueta}), since $i$ is not the minimal label.
\end{proof}

Let $L_i$ denote the set of maximal flags having label $i$. Proposition~\ref{commonface} implies
$$
X_i=\coprod_{f_{n-1}\in L_i} X_{i,f_{n-1}},
$$
where $X_{i,f_{n-1}}$ consists of the faces of $f_{n-1}$ lying on $X_{i}$. Therefore, in order to construct an acyclic pairing $P_{i}$ on  $X_{i}$, it suffices to construct corresponding acyclic pairings $P_{i,f_{n-1}}$ on each of the $X_{i,f_{n-1}}$ above, and take
\begin{equation}\label{apareamientoXI}
P_{i}=\coprod_{f_{n-1}}P_{i,f_{n-1}}.
\end{equation}

Let $f_{n-1}\in L_i$, $i\in\Sigma_n$. 
\begin{itemize}\item If $i$ is the maximal label, set $P_i=P_{i,f_{n-1}}=\varnothing$, which is the only possible pairing on $X_{i,f_{n-1}}$ (and on $X_i$), in view of Corollary~\ref{corollary1}.
\item If $i$ is the minimal label, so that $V_{f_{n-1}}$ is defined, set
\begin{equation}\label{simini}
P_{i,f_{n-1}} =
\left\lbrace (f_{k}\setminus V_{f_{n-1}},f_{k}) \;\big\lvert\;  
f_{k}\in X_{i,f_{n-1}}\mbox{ with }V_{f_{n-1}}\in f_k\rule{.3mm}{0mm}\mbox{ and }\rule{.3mm}{0mm}k>1\right\rbrace.
\end{equation}
\item If $i$ is not the maximal or the minimal label, so that $V_{f_{n-1}}$ is defined, set 
\begin{equation}\label{nomini}
P_{i,f_{n-1}} =
\left\lbrace (f_{k}\setminus V_{f_{n-1}},f_{k}) \;\big\lvert\;  
f_{k}\in X_{i,f_{n-1}}\mbox{ with }V_{f_{n-1}}\in f_k\right\rbrace.
\end{equation}
\end{itemize}

\begin{proposition}\label{acyclicpairing}
For $f_{n-1}\in L_i$, $P_{i,f_{n-1}}$ is an acyclic pairing on $X_{i, f_{n-1}}$. Consequently~(\ref{apareamientoXI}) gives an acyclic pairing on $X_i$.
\end{proposition}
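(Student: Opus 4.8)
The plan is to verify the two defining conditions of an acyclic pairing on $X_{i,f_{n-1}}$: that $P_{i,f_{n-1}}$ is a \emph{matching} (every simplex appears in at most one pair, and paired simplices differ by exactly one vertex), and that the pairing is \emph{acyclic} (the associated modified Hasse diagram has no directed cycle). The case of the maximal label is immediate since $X_{i,f_{n-1}}$ consists of a single maximal flag by Corollary~\ref{corollary1} and the empty pairing is trivially acyclic. So I would concentrate on the remaining cases, where $V=V_{f_{n-1}}$ is defined, and the pairing sends a flag $f_k$ containing $V$ to its face $f_k\setminus V$, and conversely a flag $g$ not containing $V$ to $g+V$ whenever that insertion lands back in $X_{i,f_{n-1}}$.

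First I would check the matching property. Observe that $P_{i,f_{n-1}}$ partitions $X_{i,f_{n-1}}$ (possibly up to the single flag $f_1=V_{f_{n-1}}$ itself in the minimal-label case, which the condition $k>1$ excludes and which is therefore critical) into \emph{modification pairs}: a flag containing $V$ is matched downward by deleting $V$; a flag not containing $V$ is matched upward by inserting $V$ — the key point being that if $f_k\in X_{i,f_{n-1}}$ does not contain $V$, then $f_k+V$ also lies in $X_{i,f_{n-1}}$. This last claim is where Proposition~\ref{removing} does the work: membership of a face of $f_{n-1}$ in $X_i$ is governed by the combinatorial condition~(\ref{condimenor}) on the \emph{removed} indices, and since $V$ sits at position $j_i$ with $i_{j_i}<i_{j_i+1}$, removing $V$ never violates~(\ref{condimenor}); hence $f_k\in X_{i,f_{n-1}}$ forces $f_k+V\in X_{i,f_{n-1}}$, and the two flags are matched to each other and to nothing else. (In the minimal-label case one must also record that $f_1=V$ is unmatched, i.e.\ critical, which is consistent with the $k>1$ restriction and is exactly the ``subtle role'' flagged in the remark.) So every simplex of $X_{i,f_{n-1}}$ lies in at most one pair.

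Next, acyclicity. A directed cycle in the modified Hasse diagram would have the form $\alpha_1 \to \beta_1 \to \alpha_2 \to \beta_2 \to \cdots \to \alpha_r \to \beta_r \to \alpha_1$ with each $\beta_s = \alpha_s + V$ a matched up-pair, each $\alpha_{s+1}$ a codimension-one face of $\beta_s$ \emph{other than} $\alpha_s$, and $r\ge 2$. Since $\beta_s$ contains $V$ and $\alpha_{s+1}\neq \beta_s\setminus V$, passing from $\beta_s$ to $\alpha_{s+1}$ deletes some vertex $W_s\neq V$; thus every $\alpha_s$ in the cycle contains $V$, contradicting the fact that $\alpha_s = \beta_{s-1}\setminus V$ does \emph{not} contain $V$ (as it is the bottom of an up-pair). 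This is the standard ``all matchings that add/remove a fixed element are acyclic'' argument, and it is the one I expect to go through cleanly here; the real subtlety — and the main obstacle — is not acyclicity but rather nailing down that the insertion $f_k\mapsto f_k+V$ stays inside $X_{i,f_{n-1}}$ and produces a genuine partition into pairs (plus the one critical vertex in the minimal case), which is precisely what Proposition~\ref{removing} and Proposition~\ref{commonface} are set up to deliver. Finally, the ``consequently'' clause follows from~(\ref{apareamientoXI}): the $X_{i,f_{n-1}}$ are disjoint by Proposition~\ref{commonface}, so no simplex is paired twice, and a disjoint union of acyclic matchings on disjoint simplex sets with no edges between them is acyclic.
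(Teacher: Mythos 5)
Your overall strategy coincides with the paper's (use Proposition~\ref{removing} to see that adding or deleting the distinguished vertex $V_{f_{n-1}}$ does not change membership in $X_{i,f_{n-1}}$, then run the standard ``all pairs modify a fixed vertex'' argument for the matching and acyclicity properties, and use Proposition~\ref{commonface} for the disjoint union over $L_i$), but there is one genuine gap. You assert that, away from the maximal label, $P_{i,f_{n-1}}$ partitions $X_{i,f_{n-1}}$ into pairs ``possibly up to the single flag $f_1=V_{f_{n-1}}$ itself \emph{in the minimal-label case}''. In the case where $i$ is neither minimal nor maximal, the definition~(\ref{nomini}) carries no restriction $k>1$, so you must rule out that the $1$-flag $V_{f_{n-1}}$ itself lies in $X_{i,f_{n-1}}$: otherwise ~(\ref{nomini}) would contain the ``pair'' $(f_1\setminus V_{f_{n-1}},f_1)=(\varnothing,V_{f_{n-1}})$, whose first coordinate is not a simplex, so $P_{i,f_{n-1}}$ would not even be a pairing on $X_{i,f_{n-1}}$ (and your claimed complete partition into pairs, which feeds the later count of critical cells, would fail). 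You never address this; your proof silently assumes every flag of $X_{i,f_{n-1}}$ containing $V_{f_{n-1}}$ has at least one other vertex. The paper treats this as a separate, non-trivial point: Proposition~\ref{vertex} shows that for $i$ neither minimal nor maximal the $1$-flag $V_{f_{n-1}}$ lies in some $X_k$ with $k<i$, hence not in $X_i$, and this is invoked at the very start of the paper's proof to guarantee that the first coordinates $f_k\setminus V_{f_{n-1}}$ are non-empty.

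The gap is fixable within your own framework: either cite Proposition~\ref{vertex}, or note that by Proposition~\ref{removing} the face of $f_{n-1}$ obtained by deleting every vertex except the $j_i$-th lies in $X_i$ only if $i_\ell<i_{\ell+1}$ for all $\ell\neq j_i$, which together with $i_{j_i}<i_{j_i+1}$ forces $i$ to be the minimal label $12\cdots n$ --- contradicting the standing assumption. A smaller point: your sentence ``removing $V$ never violates~(\ref{condimenor}); hence $f_k\in X_{i,f_{n-1}}$ forces $f_k+V\in X_{i,f_{n-1}}$'' conflates the two directions; what you need (and what does follow from Proposition~\ref{removing}, since the criterion is imposed vertex-wise on the deleted set containing position $j_i$) is that deletion of $V_{f_{n-1}}$ keeps a flag of $X_{i,f_{n-1}}$ inside $X_{i,f_{n-1}}$, and separately that insertion does too because the deleted set only shrinks. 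With these repairs your argument matches the paper's proof.
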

\begin{proof}
We only need to consider the case where $i$ is not the maximal label. Consider the coordinate $f_k\setminus V_{f_{n-1}}$ of any pair in~(\ref{simini}) or~(\ref{nomini}). First of all, $f_k\setminus V_{f_{n-1}}$ is non-empty as $V_{f_{n-1}}\in f_k\neq V_{f_{n-1}}$, where the latter inequality follows directly  from the requirement $k>1$ in the case of~(\ref{simini}), and from Proposition~\ref{vertex} in the case of~(\ref{nomini}). Next we argue that
\begin{equation}\label{paraargumentar}
\mbox{$f_k\setminus V_{f_{n-1}} \in X_{i,f_{n-1}}$:}
\end{equation}
By Theorem~\ref{minimalidad}, $f_{n-1}$ is the maximal flag determined by the minimal matrix representation of $f_{k}$. Say 
$$
f_{n-1}^i=V_1\subsetneq V_{2}\subsetneq\cdots\subsetneq V_{n-1} \text{ \ \ and \ \ }
f_{k}=V_{d_1}\subsetneq V_{d_2}\subsetneq\cdots\subsetneq V_{d_k},
$$
with $V_{f_{n-1}}=V_{d_\ell}$ for some $\ell=1,\ldots,k$ (i.e.~$d_\ell=j_i$, in the notation following Lemma~\ref{subcx}). Since $i_\ell<i_{\ell+1}$, by definition of $V_{f_{n-1}}$, Proposition~\ref{removing} gives~(\ref{paraargumentar}). 

So far we have made sure that $P_{i,f_{n-1}}$ is a subset of $X_{i,f_{n-1}}\times X_{i,f_{n-1}}$; the rest is easy (and standard). $P_{i,f_{n-1}}$ is a honest pairing since the first (second) coordinate in a pair in~(\ref{simini}) or~(\ref{nomini}) determines the second (first) coordinate, while a such first coordinate cannot appear also as a second coordinate (unlike the former ones, the latter ones use $V_{f_{n-1}}$ as a vertex). Lastly, $P_{i,f_{n-1}}$ is acyclic. Indeed, if $(f_{k-1},f_{k})\in P_{i,f_{n-1}}$ and $g_{k-1}\in X_{i,f_{n-1}}$ is a face of $f_{k}$ with $g_{k-1}\neq f_{k-1}$ then, by construction, there is no $g_{k}\in X_{i,f_{n-1}}$ with $(g_{k-1},g_{k})\in P_{i,f_{n-1}}$. In particular $P_{i,f_{n-1}}$ is acyclic.
\end{proof}

\begin{corollary}\label{flaghomotopytype}
$F(\mathbb{F}_q^n)$ has the homotopy type of a wedge of $(n-2)$-spheres.
\end{corollary}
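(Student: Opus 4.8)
The proof assembles the pieces already in place and then feeds them to Theorem~\ref{homotopytype}. First I would note that the partition $\{X_i\}_{i\in\Sigma_n}$ of the simplices of $F(\mathbb{F}_q^n)$, indexed by $\Sigma_n$ with the lexicographic order, satisfies the hypotheses of Lemma~\ref{lemmapairing}: the blocks are non-empty, and $\bigcup_{j\le i} X_j$ is a subcomplex by Lemma~\ref{subcx}. Combined with Proposition~\ref{acyclicpairing} (each $P_i$ is acyclic), Lemma~\ref{lemmapairing} yields that $P=\bigcup_{i}P_i$ is an acyclic pairing on $F(\mathbb{F}_q^n)$. Since the $X_i$ are pairwise disjoint and each $P_i$ is an honest pairing, no simplex lies in two pairs of $P$, so Theorem~\ref{theorem2} shows that $P$ comes from a discrete Morse function whose critical simplices are precisely the $P$-unpaired ones.

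Next I would count these critical simplices block by block. If $i$ is neither the maximal nor the minimal label and $f_{n-1}\in L_i$, then $X_{i,f_{n-1}}$ has no critical simplex: a face $f_k$ containing $V_{f_{n-1}}$ is paired down with $f_k\setminus V_{f_{n-1}}$; a face $f_k$ not containing $V_{f_{n-1}}$ is paired up with $f_k+V_{f_{n-1}}$, which still belongs to $X_{i,f_{n-1}}$ because, by Proposition~\ref{removing}, the inequalities~(\ref{condimenor}) witnessing membership of $f_k$ in $X_i$ persist for the smaller collection of deleted vertices corresponding to $f_k+V_{f_{n-1}}$; and the singleton $\{V_{f_{n-1}}\}$ is not a member of $X_{i,f_{n-1}}$ at all, by Proposition~\ref{vertex}. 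For the minimal label $12\cdots n$, the only maximal flag with that label is the standard flag $\langle e_1\rangle\subsetneq\langle e_1,e_2\rangle\subsetneq\cdots$ (minimality forces $M$ to be the identity matrix), so $X_{12\cdots n}$ is the set of all non-empty faces of the standard flag, and~(\ref{simini}) collapses this full simplex onto its apex vertex $\langle e_1\rangle$, leaving exactly one critical $0$-cell. For the maximal label $n(n-1)\cdots1$, Corollary~\ref{corollary1} says $X_{n(n-1)\cdots1}$ consists exactly of the maximal flags with that label, the pairing there is empty, and so all of these $(n-2)$-simplices are critical.

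Putting the count together, the discrete Morse function has exactly one critical cell of dimension $0$, none of dimensions $1,\dots,n-3$, and $N:=|L_{n(n-1)\cdots1}|$ of dimension $n-2$. By Theorem~\ref{homotopytype}, $F(\mathbb{F}_q^n)$ is homotopy equivalent to a CW complex with a single $0$-cell and $N$ cells of dimension $n-2$; for $n\ge 3$ each attaching map $S^{n-3}\to\{\ast\}$ is constant, so this complex is $\bigvee_N S^{n-2}$ (the small cases $n\le 2$ being immediate directly). This gives the corollary.

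I expect the delicate point to be the second step, namely verifying that for non-extremal labels the ``pair-up'' partner $f_k+V_{f_{n-1}}$ does not fall out of $X_{i,f_{n-1}}$ --- this is exactly where the characterization in Proposition~\ref{removing} and the fact that condition~(\ref{condimenor}) is inherited by sub-collections of deleted vertices are used --- together with the observation that the minimal label contributes precisely the single $0$-cell playing the role of the wedge point. The remaining bookkeeping and the final wedge-of-spheres identification are routine.
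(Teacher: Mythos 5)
Your proof is correct and follows essentially the same route as the paper: combine the pairings $P_i$ via Lemma~\ref{lemmapairing} (using Lemma~\ref{subcx} and Proposition~\ref{acyclicpairing}), count the critical cells label by label (all $(n-2)$-flags of the maximal label by Corollary~\ref{corollary1}, one $0$-cell from the unique flag of the minimal label, none otherwise), and conclude with Theorem~\ref{homotopytype}. The only difference is that you spell out, via Proposition~\ref{removing} and Proposition~\ref{vertex}, why non-extremal labels contribute no critical cells, a point the paper treats more tersely by referring back to the proof of Proposition~\ref{acyclicpairing}.
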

\begin{proof}
By Lemma~\ref{lemmapairing}, $\bigcup_{i\in \Sigma_n} P_i$ is an acyclic pairing on $X$, and by Theorem~\ref{homotopytype}, $F(\mathbb{F}_q^n)$ is homotopy equivalent to a cell complex containing as many cells as the number of critical simplices.

By Corollary~\ref{corollary1}, if $i \in \Sigma_n$ is the maximal label, then $X_i$ consists {entirely} of critical faces, all of dimension $n-2$. On the other hand, if $i\in \Sigma_n$ is not the maximal element, each $f_k \in {X_{i,f_{n-1}}}$ is paired with either $f_k+ V_{f_{n-1}}$ or $f_k\setminus V_{f_{n-1}}$. The only possible exception is when $f_k= V_{f_{n-1}}$ which, as discussed in the first half of the proof of Proposition~\ref{acyclicpairing}, is an actual exception only when $i$ is the minimal label. Therefore, there is only one more critical simplex of dimension 0 which comes from $X_{1 2 \cdots n}$. (Note that there is a single maximal flag having label $1 2\cdots n$.)

Finally, a cell complex obtained from a point by attaching cells of a fixed dimension is a wedge of spheres of that dimension, in this case $n-2$.
\end{proof}

\begin{example} Coming back to Example~\ref{exampleF23} {for} $F(\mathbb{F}_2^3)$, we indicate the labels (with colors) in the Heawood graph (Figure~\ref{Heawood2}) as well as the pairings $P_i$ {(with arrows)} constructed above. Observe that the black vertex is the 0 critical simplex and the dark blue lines are the critical simplexes of dimension $n-2$ (here $n=3$) coming from the maximal label. Therefore $\displaystyle F(\mathbb{F}_2^3) \simeq \bigvee_{8} S^1$. 
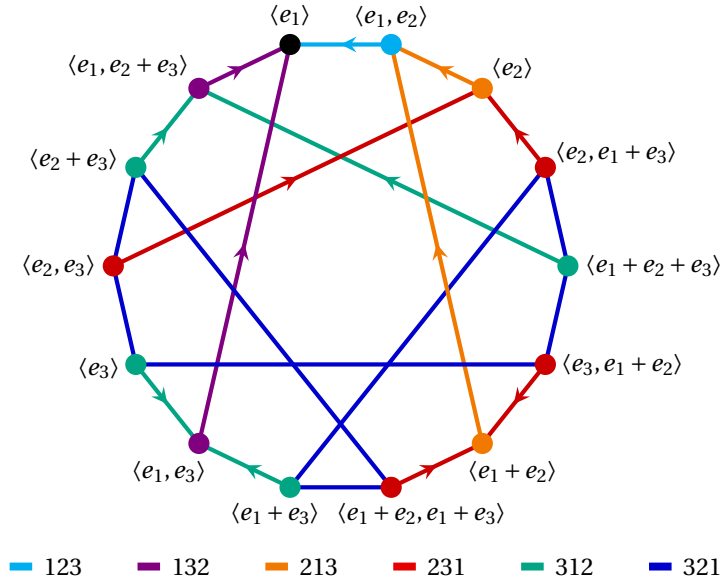
\begin{figure}[htb]
\begin{center}\footnotesize
\definecolor{mygreen}{HTML}{01a583}
\begin{tikzpicture}[very thick,decoration={
    markings,
    mark=at position 0.5 with {\arrow{stealth}}}
    ] 
% create the node
\node[minimum size=6cm,regular polygon,regular polygon sides=14] (a) {};
\draw[ultra thick, mygreen, postaction={decorate}] (a.corner 12)node[right, xshift=0.1cm, black]{$\langle e_1+e_2+e_3\rangle$} -- (a.corner 3)node[left, yshift=0.3cm, black]{$\langle e_1, e_2+e_3\rangle$};
\draw[ultra thick, blue!80!black] (a.corner 13)node[right, yshift=0.2cm, black] {$\langle e_2, e_1+e_3\rangle$}--(a.corner 8)node[below,xshift=-0.2cm, yshift=-0.08cm, black]{$\langle e_1+e_3\rangle$};
\draw[ultra thick, red!80!black, postaction={decorate}] (a.corner 5)node[left, xshift=-0.1cm, black]{$\langle
 e_2, e_3 \rangle$}--(a.corner 14)node[above, xshift=0.4cm,black]{$\langle e_2 \rangle$};
\draw[ultra thick, orange!95!black, postaction={decorate}] (a.corner 10)node[below,xshift=0.4cm, yshift=-0.1cm,black]{$\langle
 e_1+e_2\rangle$}--(a.corner 1)node[above, yshift=0.1cm, black]{$\langle e_1,e_2 \rangle$};
\draw[ultra thick, violet, postaction={decorate}] (a.corner 7)node[below, xshift=-0.4cm, yshift=-0.1cm, black]{$\langle
 e_1,e_3 \rangle$}--(a.corner 2)node[above, yshift=0.12cm, black]{$\langle e_1 \rangle$};
\draw[ultra thick, blue!80!black] (a.corner 4)node[left, xshift=-0.1cm, yshift=0.1cm, black]{$\langle e_2+e_3\rangle$}--(a.corner 9)node[below, xshift=0.4cm, yshift=-0.08cm, black]{$\langle e_1+e_2, e_1+e_3\rangle$};
\draw[ultra thick, blue!80!black] (a.corner 6)node[left, xshift=-0.08cm, yshift=-0.05cm, black]{$\langle e_3 \rangle$}--(a.corner 11)node[right, xshift=0.08cm, black]{$\langle e_3, e_1+e_2 \rangle$};
% Maximal arrows
\draw[ultra thick, blue!80!black] (a.corner 12)--(a.corner 13);
\draw[ultra thick, blue!80!black] (a.corner 11)--(a.corner 12);
\draw[ultra thick, blue!80!black] (a.corner 4)--(a.corner 5);
\draw[ultra thick, blue!80!black] (a.corner 5)--(a.corner 6);
\draw[ultra thick, blue!80!black] (a.corner 8)--(a.corner 9);
% 312 arrows
\draw[ultra thick, mygreen, postaction={decorate}] (a.corner 4)--(a.corner 3);
\draw[ultra thick, mygreen, postaction={decorate}] (a.corner 6)--(a.corner 7);
\draw[ultra thick, mygreen, postaction={decorate}] (a.corner 8)--(a.corner 7);
% 231 arrows
\draw[ultra thick, red!80!black, postaction={decorate}] (a.corner 13)--(a.corner 14);
\draw[ultra thick, red!80!black, postaction={decorate}] (a.corner 9)--(a.corner 10);
\draw[ultra thick, red!80!black, postaction={decorate}] (a.corner 11)--(a.corner 10);
% 213 arrow
\draw[ultra thick, orange!95!black, postaction={decorate}] (a.corner 14)--(a.corner 1);
% 132 arrow
\draw[ultra thick, violet, postaction={decorate}] (a.corner 3)--(a.corner 2);
% 123 arrow
\draw[ultra thick, cyan, postaction={decorate}] (a.corner 1)--(a.corner 2);
\fill[cyan] (a.corner 1) circle[radius=4pt];
\fill (a.corner 2) circle[radius=4pt];
\fill[violet] (a.corner 3) circle[radius=4pt];
\fill[mygreen] (a.corner 4) circle[radius=4pt];
\fill[red!80!black] (a.corner 5) circle[radius=4pt];
\fill[mygreen] (a.corner 6) circle[radius=4pt];
\fill[violet] (a.corner 7) circle[radius=4pt];
\fill[mygreen] (a.corner 8) circle[radius=4pt];
\fill[red!80!black] (a.corner 9) circle[radius=4pt];
\fill[orange!95!black] (a.corner 10) circle[radius=4pt];
\fill[red!80!black] (a.corner 11) circle[radius=4pt];
\fill[mygreen] (a.corner 12) circle[radius=4pt];
\fill[red!80!black] (a.corner 13) circle[radius=4pt];
\fill[orange!95!black] (a.corner 14) circle[radius=4pt];
\draw[cyan, line width=3pt] (-4.4,-4)--(-4.1,-4)node[black, right]{123};
\draw[violet, line width=3pt] (-2.7,-4)--(-2.4,-4)node[black, right]{132};
\draw[orange!95!black, line width=3pt] (-1,-4)--(-0.7,-4)node[black, right]{213};
\draw[red!90!black, line width=3pt] (0.7,-4)--(1,-4)node[black, right]{231};
\draw[mygreen, line width=3pt] (2.4,-4)--(2.7,-4)node[black, right]{312};
\draw[blue!80!black, line width=3pt] (4.1,-4)--(4.4,-4)node[black, right]{321};
\end{tikzpicture}
\end{center}
\caption{The labeling for $F(\mathbb{F}_2^3)$ and pairings indicated by arrows.}\label{Heawood2}
\end{figure}
\end{example}

\section{Counting Critical Cells}

Maximal flags in $F(\mathbb{F}^n_q)$ are in one-to-one correspondence with $n\times n$ matrices with entries in $ \mathbb{F}^n_q$ satisfying conditions \emph{(ii)} and \emph{(iii)} in Section~\ref{secpar}. Thus, the counting principle easily gives that the number $f^i$ of maximal flags having a given label $i\in\Sigma_n$ is $f^i=q^{\sum_{j=1}^n m_j}$,
where
$$%\begin{equation}\label{pcnfei}
m_j = i_j-1-\left\lvert\{i_k \,|\, i_k < i_j \text{ and } k<j \}\right\rvert
$$%\end{equation}
and $i=i_1 i_2 \cdots i_n$. Indeed, the summand ``${}-1$'' comes from condition~\emph{(ii)}, and the summand ``${}-\left\lvert\{i_k \,|\, i_k < i_j \text{ and } k<j \}\right\rvert$'' comes from condition~\emph{(iii)}. In particular, there exists $f^{n(n-1) \cdots 1}= q^{\sum_{k=1}^{n-1} k} = q^{\binom{n}{2}}$ maximal flags with maximal label.

\begin{corollary}\label{betticompletos}
For $n\geq 2$, the flag complex $F(\mathbb{F}_q^n)$ has the homotopy type of the wedge of $q^{\binom{n}{2}}$ spheres of dimension $n-2$.
\end{corollary}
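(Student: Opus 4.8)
The plan is to combine the qualitative conclusion of Corollary~\ref{flaghomotopytype} with the quantitative count of maximal flags obtained just above. First I would recall that, by Corollary~\ref{flaghomotopytype} together with Theorem~\ref{homotopytype}, $F(\mathbb{F}_q^n)$ is homotopy equivalent to a CW complex whose cells in each dimension are in bijection with the critical simplices of the acyclic matching $\bigcup_{i\in\Sigma_n}P_i$ of that dimension. Since the spheres of the wedge in Corollary~\ref{flaghomotopytype} are glued to a single point, it suffices to count the critical simplices of dimension $n-2$.

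Next I would isolate the contributions of the various $X_i$, reading them off from the proof of Corollary~\ref{flaghomotopytype}. For a label $i$ that is neither maximal nor minimal, every face in each $X_{i,f_{n-1}}$ is matched, being paired with $f_k\pm V_{f_{n-1}}$ and with no exception, since $f_k\neq V_{f_{n-1}}$ by Proposition~\ref{vertex}. For the minimal label $12\cdots n$, the only unmatched face is the single vertex $V_{f_{n-1}}$, a critical $0$-simplex. For the maximal label $n(n-1)\cdots1$, the matching $P_i$ is empty, so every element of $X_{n(n-1)\cdots1}$ is critical; and by Corollary~\ref{corollary1}, $X_{n(n-1)\cdots1}$ is precisely the set of all maximal flags with that label, each of which is an $(n-2)$-simplex. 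Hence the critical $(n-2)$-simplices are exactly those maximal flags (for $n\geq 3$ the lone critical $0$-simplex lies in a strictly smaller dimension; for $n=2$ it is simply the extra disjoint point, consistent with a $0$-dimensional wedge).

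Then I would invoke the counting formula established immediately before the statement: the number $f^i$ of maximal flags with label $i=i_1\cdots i_n$ equals $q^{\sum_{j=1}^n m_j}$, and for $i=n(n-1)\cdots1$ this evaluates to $q^{\sum_{k=1}^{n-1}k}=q^{\binom n2}$. Therefore $F(\mathbb{F}_q^n)$ is homotopy equivalent to a point with $q^{\binom n2}$ cells of dimension $n-2$ attached, which is a wedge of $q^{\binom n2}$ copies of $S^{n-2}$ by the final paragraph of the proof of Corollary~\ref{flaghomotopytype}.

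I do not expect a genuine obstacle, as the statement is essentially a bookkeeping corollary of results already in place. The only point requiring care is confirming that the maximal-label class $X_{n(n-1)\cdots1}$ is the sole source of critical cells in the top dimension $n-2$; but this is exactly what the case analysis in the proof of Corollary~\ref{flaghomotopytype} supplies, so the argument reduces to quoting that analysis and then substituting $i=n(n-1)\cdots1$ into the explicit formula for $f^i$.
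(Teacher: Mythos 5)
Your proposal is correct and follows essentially the same route as the paper, which treats this corollary as an immediate consequence of the critical-cell analysis in the proof of Corollary~\ref{flaghomotopytype} together with the formula $f^{n(n-1)\cdots 1}=q^{\binom{n}{2}}$ established just before the statement. Your recap of why only the maximal-label class contributes critical $(n-2)$-cells (and the minimal label a single critical vertex) matches the paper's argument exactly.
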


\begin{example}
For $n=2$, the homotopy equivalence in Corollary~\ref{betticompletos} is in fact a homeomorphism: $\mathbb{F}_q^2$ has precisely $q+1$ lines.
\end{example}

\begin{remark}\label{alesqueleto}
Deleting all pairs in the pairing $P = \bigcup_{i \in \Sigma_n} P_i$ having simplices of dimension greater than $k$, yields a pairing for the $k$-skeleton of $F(\mathbb{F}_q^n)$, which remains acyclic because $P$ is acyclic in view of Lemma~\ref{lemmapairing} and Proposition~\ref{acyclicpairing}. 
\end{remark}

\begin{proposition}\label{kskeleton}
The $k$-skeleton of $F(\mathbb{F}_q^n)$ has the homotopy type of a wedge of $k$-spheres.
\end{proposition}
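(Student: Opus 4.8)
\section*{Proof plan}

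The plan is to truncate the acyclic pairing $P=\bigcup_{i\in\Sigma_n}P_i$ of Section~4 to the $k$-skeleton and read off the critical cells that survive. We may assume $1\le k<\dim F(\mathbb{F}_q^n)=n-2$: for $k\ge n-2$ one has $F(\mathbb{F}_q^n)^{(k)}=F(\mathbb{F}_q^n)$ and Corollary~\ref{betticompletos} applies, while for $k=0$ the skeleton is a non-empty finite set, hence a wedge of $0$-spheres. By Remark~\ref{alesqueleto}, deleting from $P$ every pair that contains a simplex of dimension larger than $k$ produces an acyclic pairing $P^{(k)}$ on $F(\mathbb{F}_q^n)^{(k)}$, so by Theorem~\ref{homotopytype} it suffices to show that the critical cells of $P^{(k)}$ occur only in dimensions $0$ and $k$, with a single one in dimension $0$.

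First I would recall, from the proof of Corollary~\ref{flaghomotopytype}, the complete list of critical cells of $P$: the maximal flags carrying the maximal label $n(n-1)\cdots1$, all of dimension $n-2$, together with a single vertex, namely $V_{f_{n-1}}$ for the unique maximal flag $f_{n-1}$ of minimal label $12\cdots n$. Now every pair of $P$ both of whose members have dimension $\le k$ is retained in $P^{(k)}$; hence a simplex of dimension $<k$ is critical for $P^{(k)}$ if and only if it is already critical for $P$, while a simplex of dimension $k$ is critical for $P^{(k)}$ if and only if it is critical for $P$ or is matched by $P$ with a $(k+1)$-simplex. A $(k+1)$-flag lying in $X_{n(n-1)\cdots1}$ would have to be a maximal flag by Corollary~\ref{corollary1}, which is impossible for $k+1<n-1$; so the only critical cell of $P$ of dimension $\le k$ is the distinguished vertex. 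This gives exactly what is needed: the critical cells of $P^{(k)}$ consist of that one vertex in dimension $0$, together with a set of cells in dimension $k$, and nothing else.

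Those $k$-cells can be described explicitly: by Theorem~\ref{minimalidad} each $(k+1)$-flag $f$ lies in $X_{i,g_{n-1}}$, where $i$ is its label and $g_{n-1}$ is the maximal flag determined by $M(f)$, and since $i$ is not the maximal label, $P$ matches $f$ with $f\setminus V_{g_{n-1}}$ or with $f+V_{g_{n-1}}$ according as $V_{g_{n-1}}\in f$ or not; the latter ``upward'' case is precisely when $f$ is critical for $P^{(k)}$. Writing $N$ for the number of such $f$, Theorem~\ref{homotopytype} yields that $F(\mathbb{F}_q^n)^{(k)}$ is homotopy equivalent to a CW complex with one $0$-cell, $N$ cells of dimension $k$, and no other cells. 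A complex of this shape is built from a point by attaching $k$-cells along maps $S^{k-1}\to\ast$ (for $k=1$ it is a bouquet of $N$ circles), hence is $\bigvee_N S^k$; therefore $F(\mathbb{F}_q^n)^{(k)}\simeq\bigvee_N S^k$, a wedge of $k$-spheres.

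The step I expect to be the main obstacle is the bookkeeping of the second paragraph: making precise which pairs of $P$ the truncation destroys, and verifying that the only critical cell of $P$ that descends into a proper skeleton is the distinguished vertex. Once that is pinned down the homotopy-type conclusion is a formality; the genuinely new content, an explicit value of $N$, is a separate enumeration to be taken up afterwards.
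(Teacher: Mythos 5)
Your proposal is correct and follows essentially the same route as the paper: truncate the acyclic pairing $P$ as in Remark~\ref{alesqueleto}, observe that the surviving critical cells are the single $0$-simplex together with exactly those $k$-simplices that had been paired upward with a simplex of dimension $k+1$ (no other critical cells of $P$ have dimension $\le k$ when $k<n-2$), and apply Theorem~\ref{homotopytype} to conclude a wedge of $k$-spheres. Your explicit treatment of the boundary cases $k=0$ and $k\ge n-2$ is a small additional care the paper leaves implicit, but the argument is the same.
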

\begin{proof}
After deleting from $P$ those pairs having simplices of dimension greater than $k$, we get an acyclic paring (Remark~\ref{alesqueleto}) for the $k$-skeleton of $F(\mathbb{F}^n_q)$ having some critical simplices $f_{k+1}$ of dimension $k$ ({namely, those $f_{k+1}$  with $(f_{k+1}, f_{k+1}+V_{f_{n-1}})\in P_{i,f_{n-1}}$ for some maximal flag $f_{n-1}$ with label $i$}) and only one critical $0$-simplex. Theorem~\ref{homotopytype} implies that the $k$-skeleton has the homotopy type of a wedge of $k$-spheres.
\end{proof}

An ascending pair of a label $i=i_1i_2\cdots i_n\in\Sigma_n$ is a pair $(i_t,i_{t+1})$ of consecutive indices with $i_t<i_{t+1}$. For instance, any non-maximal label $i$ has at least one ascending pair (the one with $t=j_i$, in the notation following Lemma~\ref{subcx}). Let $p_i$ denote the number of ascending pairs in the label~$i$. 

Note that the critical $k$-simplices in the previous proof occur when we can remove $n-2-(k+1)$ vertices from the set of ascending pairs of $i$ neither of which is the initial ascending pair $(i_{j_i},i_{j_i+1})$ (by Proposition~\ref{removing}, the face obtained in this fashion is also in $X_i$). This allows us to count the number of $k$-spheres in the {wedge sum} of Proposition~\ref{kskeleton}. Recall the cardinality of $L_i$, $f^i=\lvert L_i \rvert$, is determined at the beginning of this section.

\begin{corollary}{For $k\in \{0,...,n-2\}$, the $k$-th skeleton of $F(\mathbb{F}_q^n)$ has the homotopy type of a wedge of $\displaystyle\sum_{i\in \Sigma_{n}}\Big(\begin{smallmatrix} p_i-1 \\[2pt] n-k-3 \end{smallmatrix}\Big)\cdot f^i$ spheres of dimension $k$.}
\end{corollary}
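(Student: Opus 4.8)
The plan is to count, for each $k\in\{0,\dots,n-2\}$, the critical $k$-simplices of the restricted acyclic pairing produced in Remark~\ref{alesqueleto} and the proof of Proposition~\ref{kskeleton}, and then read off the homotopy type from Theorem~\ref{homotopytype}.

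First I would record the shape of the critical-cell data. In the full pairing $P=\bigcup_i P_i$ the only critical simplices are the lone $0$-simplex $V_{f_{n-1}}$ coming from $X_{12\cdots n}$ and the $q^{\binom n2}$ maximal flags carrying the maximal label $n(n-1)\cdots1$; every simplex of intermediate dimension is paired (Corollary~\ref{flaghomotopytype}). Deleting from $P$ the pairs that involve a simplex of dimension $>k$ therefore leaves exactly: the single critical $0$-simplex; no critical simplex in dimensions $1,\dots,k-1$ (a simplex of dimension $<k$ keeps its pair, and the larger member of that pair still has dimension $\le k$); the maximal-label flags when $k=n-2$; and those $k$-simplices that were the smaller member of a pair of $P$ whose larger member had dimension $k+1$. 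By Theorem~\ref{homotopytype} the $k$-skeleton is thus homotopy equivalent to a CW complex consisting of one $0$-cell together with $N$ cells of dimension $k$, where $N$ is the number of those critical $k$-simplices (with the maximal-label flags included when $k=n-2$). For $k\ge1$ such a complex is $\bigvee_N S^k$, exactly as in the last line of the proof of Corollary~\ref{flaghomotopytype}; for $k=0$ it is a set of $N+1$ points, homotopy equivalent to $\bigvee_N S^0$, the extra point being precisely the spare critical $0$-simplex. So in every case the number of $k$-spheres equals $N$.

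Next I would compute $N$. A $k$-simplex is a critical cell of the restricted pairing of the type above precisely when, in $P$, it is a $(k+1)$-flag $f_{k+1}$ lying in some $X_{i,f_{n-1}}$ with $V_{f_{n-1}}\notin f_{k+1}$ and $f_{k+1}+V_{f_{n-1}}\in X_{i,f_{n-1}}$; this is just the description of the smaller members of the dimension-$(k+1)$ pairs in~(\ref{simini}) and~(\ref{nomini}), and the disjointness in~(\ref{apareamientoXI}) together with Proposition~\ref{commonface} guarantees each such simplex is counted once. Fix a non-maximal label $i$ and a maximal flag $f_{n-1}\in L_i$. Writing $f_{k+1}$ as the face of $f_{n-1}$ obtained by deleting a set $S$ of $n-1-(k+1)=n-k-2$ of its vertices, Proposition~\ref{removing} says $f_{k+1}\in X_i$ iff every position in $S$ is the left end of an ascending pair of $i$, while the requirement $V_{f_{n-1}}\notin f_{k+1}$ forces $S$ to contain the position $j_i$ of $V_{f_{n-1}}$, which is the left end of the initial ascending pair. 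Conversely, for any such $S$ both $f_{k+1}$ (delete $S$) and $f_{k+1}+V_{f_{n-1}}$ (delete $S\setminus\{j_i\}$) lie in $X_i$ by Proposition~\ref{removing}, so $(f_{k+1},f_{k+1}+V_{f_{n-1}})\in P$. Hence the critical $k$-simplices arising from $(i,f_{n-1})$ are in bijection with the $(n-k-2)$-element subsets of the $p_i$ ascending-pair positions of $i$ that contain $j_i$, equivalently with the $(n-k-3)$-element subsets of the remaining $p_i-1$ ascending-pair positions; there are $\binom{p_i-1}{n-k-3}$ of them. Summing over the $f^i=|L_i|$ flags with label $i$ and then over all labels yields $N=\sum_{i\in\Sigma_n}\binom{p_i-1}{n-k-3}f^i$, where the maximal label (no ascending pair, $p_i=0$) contributes $0$ when $k<n-2$ and accounts for the $q^{\binom n2}$ top-dimensional critical cells when $k=n-2$, in agreement with Corollary~\ref{betticompletos}. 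This is the asserted count.

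The only genuinely delicate point is the middle step: translating ``critical $k$-simplex'' into a subset of ascending pairs so that Proposition~\ref{removing} applies cleanly to $f_{k+1}$ and to $f_{k+1}+V_{f_{n-1}}$ simultaneously, and making sure that no critical cell is overlooked or double counted across the various $X_{i,f_{n-1}}$. Everything else is the routine binomial bookkeeping together with tracking the single critical $0$-cell through the boundary cases $k=0$ and $k=n-2$.
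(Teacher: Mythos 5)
Your proposal is correct and follows essentially the same route as the paper: identify the critical $k$-simplices of the restricted pairing as the smaller members of the deleted dimension-$(k{+}1)$ pairs, use Proposition~\ref{removing} to put them in bijection with $(n-k-3)$-element subsets of the non-initial ascending pairs of each label, and sum $\binom{p_i-1}{n-k-3}$ over the $f^i$ flags in each $L_i$. Your explicit treatment of the boundary cases $k=0$ and $k=n-2$ (and of the no-double-counting point via Proposition~\ref{commonface}) only adds detail that the paper leaves implicit.
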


\bigskip\smallskip
{\small \sc Departamento de Matem\'aticas \\
Centro de Investigaci\'on y de Estudios Avanzados del I.P.N. \\
Av.~Instituto Polit\'ecnico Nacional n\'umero 2508 \\
San Pedro Zacatenco, M\'exico City 07000, M\'exico \\
{\tt jaguzman@math.cinvestav.mx} \\
{\tt jesus@math.cinvestav.mx} \\
{\tt jlleon@math.cinvestav.mx}
}

\begin{thebibliography}{1}

\bibitem{Forman}
Robin Forman.
\newblock A user's guide to discrete {M}orse theory.
\newblock {\em S\'em. Lothar. Combin.}, 48:Art. B48c, 35, 2002.

\bibitem{hersh}
Patricia Hersh.
\newblock On optimizing discrete {M}orse functions.
\newblock {\em Advances in Applied Mathematics}, 35(3):294--322, 2005.

\bibitem{MR2724673}
J\"{u}rgen Herzog and Takayuki Hibi.
\newblock {\em Monomial ideals}, volume 260 of {\em Graduate Texts in
  Mathematics}.
\newblock Springer-Verlag London, Ltd., London, 2011.

\bibitem{jonsson}
Jakob Jonsson.
\newblock {\em Simplicial complexes of graphs}, volume 1928 of {\em Lecture
  Notes in Mathematics}.
\newblock Springer-Verlag, Berlin, 2008.

\bibitem{MR2361455}
Dmitry Kozlov.
\newblock {\em Combinatorial algebraic topology}, volume~21 of {\em Algorithms
  and Computation in Mathematics}.
\newblock Springer, Berlin, 2008.

\bibitem{Kratzer-Thevenaz}
Charles Kratzer and Jacques Th\'evenaz.
\newblock Type d'homotopie des treillis et treillis des sous-groupes d'un
  groupe fini.
\newblock {\em Comment. Math. Helv.}, 60(1):85--106, 1985.

\bibitem{Zax}
Rachel~Elana Zax.
\newblock Simplifying complicated simplicial complexes: Discrete {M}orse theory
  and its applications, {A}.{B}. {T}hesis, Harvard University, 2012.
\newblock Available from:
  \url{http://www.math.harvard.edu/theses/senior/zax/zax.pdf}.

\end{thebibliography}
\end{document}